\documentclass[sn-mathphys]{sn-jnl}

\jyear{2021}%
\usepackage{enumerate}
\usepackage[T1]{fontenc}
\usepackage{amsthm}
\usepackage{array}

\theoremstyle{thmstyleone}%
\newtheorem{theorem}{Theorem}

\newtheorem{cor}[theorem]{Corollary}
\newtheorem{prop}[theorem]{Proposition}

\newtheorem{proposition}[theorem]{Proposition}%

\theoremstyle{thmstyletwo}%
\newtheorem{remark}{Remark}%
\theoremstyle{thmstylethree}%
\newtheorem{definition}{Definition}%

\begin{document}

\title[Global and nonglobal solutions for an abstract semilinear heat equation]{Global and nonglobal solutions for an abstract semilinear heat equation}

\author*[1]{\fnm{Ricardo} \sur{Castillo}}\email{rcastillo@ubiobio.cl}

\author[2]{\fnm{Ricardo} \sur{Freire}}\email{ricardo.silva@uesb.edu.br}
\equalcont{This manuscript was submitted for publication on November 11, 2024.}

\author[3]{\fnm{Miguel} \sur{Loayza}}\email{miguel.loayza@ufpe.br}
\equalcont{This manuscript was submitted for publication on November 11, 2024}

\affil*[1]{\orgdiv{Departamento de Matem\'atica, Facultad de Ciencias}, \orgname{Universidad del B\'io-B\'io}, \orgaddress{\street{Avenida Collao 1202, Casilla 5-C}, \city{Concepción}, \state{B\'{\i}o-B\'{\i}o}, \country{Chile}}}

\affil[2]{\orgdiv{Departamento de Matem\'atica}, \orgname{Universidade Estadual do Sudoeste da Bahia - UESB}, \orgaddress{ \city{Vitória da Conquista}, \state{Bahia}, \country{Brasil}}}


\affil[3]{\orgdiv{Departamento de Matem\'atica}, \orgname{Universidade Federal de Pernambuco}, \orgaddress{ \city{Recife}, \state{Pernambuco}, \country{Brasil}}}


\abstract{In this paper, we focus on the existence of both global and non-global nonnegative solutions for the abstract semilinear heat equation $u_t - Lu = \sum_{i=1}^k \ell_i(t) f_i(u)$ in $L^p(X, \mu)$. Here $(X,\mu)$ represents a $\sigma$-finite measure space, $L$ is the infinitesimal generator of a sub-Markovian strongly continuous semigroup of bounded linear operators in $L^p(X,\mu)$, and for each \(i = 1, \ldots, n\), the functions \(\ell_i: [0, \infty) \rightarrow [0, \infty)\) and \(f_i: [0, \infty) \rightarrow [0, \infty)\) are continuous. We utilize our findings to apply to the Laplacian operator on sub-Riemannian manifolds and locally finite graphs obtaining Fujita-type results. Our results extended the recent work of Lenz, Schmidt, and Zimmermann referenced in \cite{ZIMMERMANN}.}

\keywords{Heat equation, Non-global solution, Global solution, sub-Riemannian manifolds, graphs, Fujita exponent.}
\pacs[MSC Classification]{35K90, 35A01, 58J35, 35B33, 35R02, 35R03}

\maketitle

\section{Introduction} 


Let $(X,\mu)$ a $\sigma$-finite measure space, $p \in [1,\infty)$ and $L: D(L) \rightarrow L^p(X, \mu)$ be a closed linear operator in $L^p(X, \mu)$ generating a strongly continuous sub-Markovian semigroup $\{S_L(t) \}_{t \geq 0}$ of bounded linear operators in $L^p(X, \mu)$.  For further information regarding semigroup theory see the books \cite{Engel} and \cite{PAZY}. The space
$$ L^p(X,\mu) :=  \left\{ \varphi: X \rightarrow \mathbb{R} ;  \int_{X} \vert \varphi \vert^{p} (s) d\mu(s)  < \infty  \right\} $$
is equipped with the norm $ \| \varphi \|_p :=  \left( \int_{X} \vert \varphi \vert^{p} (s) d\mu(s) \right)^{1/p}$ for $1\leq p<\infty$. When $p=\infty$, we define
$$ L^\infty(X,\mu) :=  \left\{ \varphi: X \rightarrow \mathbb{R};  \varphi \mbox{ is measurable and $\mu$-essentially bounded }  \right \}, $$
with its usual norm 
$ \| \varphi \|_\infty $. The set $L^p_{+}(X,\mu)$ denotes the subset of $L^p(X,\mu)$ consisting of functions that are nonnegative $\mu$-almost everywhere (a.e.).

A significant theory about these semigroups, mentioned at the outset, is well-documented (e.g., \cite{VAROPOULOS1}, \cite{Rocner}, \cite{JACOB}, \cite{LISKEVICH}, \cite{LISKEVICH1}, \cite{FARKAS} and the references therein). Specifically, it is widely known that when $L$ is a positive self-adjoint operator on $L^2(X, \mu)$ or a Dirichlet operator on $L^2(X, \mu)$, the corresponding semigroup $\{S_L(t)\}_{t\geq 0}$ is sub-Markovian.  The extendibility of such semigroups to $L^\infty(X,\mu)$ have been examined in \cite{LISKEVICH} and \cite{JACOB}, while the existence of the kernel operator associated with these semigroups were established in \cite{JACOB}.

We consider the following abstract semilinear parabolic problem
\begin{equation}\label{eq2.1}
 \left\{\begin{array}{rll}
 u_{t} - L u   = & \sum_{i=1}^k \ell_i(t) f_i(u) & (x,t) \in  X \times (0,T),\\
 u(0)= & u_{0}\geq 0 & x \in X ,\\
 u & \geq 0,
      \end{array}\right.
\end{equation}
where $k\in \mathbb{N}$, $u_{0}\in L^p_+(X, \mu)$,  $ \ell_i: [0, \infty) \rightarrow [0,\infty)$ is continuous, and $ f_i: [0, \infty) \rightarrow [0,\infty)$ is locally Lipschitz continuous verifying $f_i(0)=0$ for $i=1,\cdots, k$. Additionally, to obtain global and non-global existence results we assume the following conditions:
\begin{itemize}
\item[($H_1$)] There exists $\lambda>0$ such that the function $s\in (0,\lambda] \mapsto f_i(s)$ is nondecreasing on $(0, \lambda]$ and $\limsup_{s\to 0^+} f_i(s)/s <\infty$ for every $i=1,\cdots, k$.\\

\item[($H_2$)] There exists $j \in \{1, \cdots, k\}$ such that $f_j$ is convex, strictly positive on $(0,\infty)$ and satisfies $ \displaystyle \int_{\tau}^{\infty} \frac{d \sigma}{f_j(\sigma)}  < \infty $ for all $\tau > 0$.
\end{itemize}
Condition $(H_1)$ is verified for $f(t) = t^{p}$, where $p > 1$ and $t \geq 0$. More generally, this is true when $f: [0, \infty) \rightarrow [0,\infty)$ is a non-decreasing convex function with $f(0) = 0$. Another example is the concave function $f(t)$, defined by $f(t)=\sin(t)$ for $t\in [0,\pi/2]$ and $f(t)=1$ for $t>\pi/2$. On the other hand, the integral condition in $(H_2)$ is related to the Osgood-type condition (see, e.g. \cite{Laister1}) which assures that the solution of the ordinary differential equation $u' =f(u)$ is defined on a finite interval. The sum in (\ref{eq2.1}) allows us to combine different kinds of nonlinearities as treated in \cite{HUI}.

Recently, Lenz, Schmidt, and Zimmermann in \cite{ZIMMERMANN} obtained a sufficient condition for the non-existence of global nonnegative mild solutions to the abstract problem
\begin{equation}\label{SingleAbstract}
\left \{ \begin{array}{rll}
 u_t-  L u & =  f(u)& \mbox{ for } t \in (0,T),\\
u(0)& = u_0,  & \\
u &\geq 0,   &
\end{array}\right.
\end{equation}
where $u_0 \in L^p_{+}(X,\mu),$ $f: [0,\infty) \rightarrow [0,\infty)$ is a continuous and convex function (thus locally Lipschitz continuous), and the operator $L$ satisfies the same conditions as stated in the above lines. They showed the following theorem:
\begin{theorem}[\cite{ZIMMERMANN}]\label{AbstractTHEO} Let $u_0 \in L^p_{+}(X,\mu).$ Assume that $f$ is a nonnegative continuous convex function such that $f(0)=0$, $f(\sigma)>0 $ for all $\sigma \in (0, \infty)$ and  $ \displaystyle \int_{z}^{\infty} \frac{d \sigma}{f(\sigma)} < \infty $ for all $z > 0.$ If there exists a time $T > 0$ and a
measurable subset $K$ of $X$ with $\mu(K) \in (0,\infty)$ such that
$$ \displaystyle \int_{\frac{1}{\mu(K)} \int_{K} S_{L}(T) u_0 d\mu}^{\infty} \frac{d \sigma}{f(\sigma)} \leq T.  $$
Then there is no global mild solution of problem \eqref{SingleAbstract}.
\end{theorem}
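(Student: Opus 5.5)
We argue by contradiction: suppose $u$ is a global mild solution, so for every $t\ge 0$
$$u(t)=S_L(t)u_0+\int_0^t S_L(t-s)f(u(s))\,ds .$$
Fix the time $T$ and the set $K$ from the hypothesis. The plan is to test the mild formulation at time $T$ against a time-dependent functional that turns the PDE into a scalar differential inequality, and then to use the Osgood-type integrability of $1/f$ to force blow-up before $T$.

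The test function is $\varphi_t:=S_L(T-t)^{*}\bigl(\chi_K/\mu(K)\bigr)$ for $t\in[0,T]$, and the scalar quantity is
$$F(t):=\int_X u(t)\,\varphi_t\,d\mu=\frac{1}{\mu(K)}\int_K S_L(T-t)u(t)\,d\mu .$$
By sub-Markovianity, $\varphi_t\ge 0$ and $\int\varphi_t\,d\mu\le 1$, so $\varphi_t\,d\mu$ is a sub-probability measure. Applying $S_L(T-t)$ to the mild formula and using the semigroup property gives
$$F(t)=F(0)+\frac{1}{\mu(K)}\int_0^t\int_K S_L(T-s)f(u(s))\,d\mu\,ds,\qquad F(0)=\frac{1}{\mu(K)}\int_K S_L(T)u_0\,d\mu .$$
Positivity of $S_L$ lets us apply Tonelli here. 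Since $f$ is convex with $f(0)=0$, the map $\sigma\mapsto f(\sigma)/\sigma$ is nondecreasing, so $f(c\sigma)\le c f(\sigma)$ for $c\in[0,1]$. Combining this with Jensen's inequality for the sub-probability measure $\varphi_s\,d\mu$ (first normalizing by the mass $m\le 1$ and then using $f(m\tau)\le m f(\tau)$) yields
$$\int f(u(s))\,\varphi_s\,d\mu\ \ge\ f\Bigl(\int u(s)\varphi_s\,d\mu\Bigr)=f(F(s)).$$
Hence $F(t)\ge F(0)+\int_0^t f(F(s))\,ds$ on $[0,T]$.

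A comparison argument then finishes the proof. We may assume $F(0)>0$, since otherwise the hypothesis fails because the integral is infinite. Let $G(t)$ denote the right-hand side of the last inequality. Then $G'=f(F)\ge f(G)$, using that $f$ is nondecreasing on the range where it matters; this holds since $f$ is convex, positive and $f(0)=0$. Therefore
$$\frac{d}{dt}\int_{G(t)}^{\infty}\frac{d\sigma}{f(\sigma)}\le -1,$$
so $\int_{F(0)}^{\infty}d\sigma/f(\sigma)\ >\ t$ for every $t\in[0,T]$. At $t=T$ this contradicts the hypothesis $\int_{F(0)}^\infty d\sigma/f\le T$. For the case of equality, we apply the argument at a slightly larger time $T'$ or use a strict inequality coming from $G(T)<\infty$. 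Since $F(t)$ is finite for every $t$ (because $u(t)\in L^p$ and $\chi_K/\mu(K)\in L^{p'}$), the strict inequality $\int_{G(T)}^\infty d\sigma/f>0$ gives the required strictness.

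The main obstacle is rigor in the functional step. One must justify that $F$ is finite and measurable, including the case $p=1$, where duality pairs $L^1$ against $L^\infty$. One must also justify interchanging $S_L(T-t)$ with the Bochner integral in the mild formula. Finally, the Jensen step for the possibly non-normalized measure $\varphi_s\,d\mu$ must be handled carefully; this is exactly where $f(0)=0$ and convexity are needed. I would first settle these measure-theoretic points, using positivity and monotone convergence, and then run the ODE comparison.
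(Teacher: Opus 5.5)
Your proposal is correct and is essentially the argument the paper gives for Theorem~\ref{thm1}~(ii), which contains this statement as the case $k=1$, $\ell_1\equiv 1$. It uses the same functional $\frac{1}{\mu(K)}\int_K S_L(T-t)u(t)\,d\mu$, the same integral identity from the mild formula, a Jensen step, and integration of $\int_{\cdot}^{\infty}d\sigma/f(\sigma)$ along the trajectory. You differ in only two places: you do Jensen in one stroke with the sub-probability density $S_L(T-t)^{*}(\chi_K/\mu(K))$ and $f(c\sigma)\le cf(\sigma)$, where the paper uses $S_L(T-t)f(u)\ge f(S_L(T-t)u)$ followed by Jensen on $K$ with $d\mu/\mu(K)$; and you compare with $G$ rather than integrating $\Theta'/f(\Theta)$ directly.

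For the equality case, rely only on the strictness from $\int_{G(T)}^{\infty}d\sigma/f(\sigma)>0$, which holds because $G(T)\le F(T)<\infty$. The alternative of passing to a slightly larger $T'$ does not work as stated, since $F(0)=\frac{1}{\mu(K)}\int_K S_L(T)u_0\,d\mu$ itself changes with $T$.
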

In the proof, the authors used a mild solution formulation to deduce a suitable ordinary differential inequality. It is important to note that this technique was introduced by Weissler in \cite{Weissler1,Weissler2} and by Fujita in \cite{Fujita1} (Fujishima and Ioku recently applied this method successfully in \cite[Lemma 4.1]{Fujishima1}). Moreover, they have utilized Theorem \ref{AbstractTHEO} in various interesting scenarios, such as the Laplacian on Riemannian manifolds (see \cite[Theorem 5.3]{ZIMMERMANN}), graphs (see \cite[Theorem 5.7]{ZIMMERMANN}), and metric measure spaces with a heat kernel (see \cite[Theorem 5.9]{ZIMMERMANN}).  

When $L=\Delta$ is the Laplacian operator,  Fujita (see \cite{FU}) studied the heat equation
\begin{equation}\label{eq2.1f}
\left\{ \begin{array}{rll}
 u_t - \Delta u & =  u^q& \mbox{ in }  \mathbb{R}^N \times (0,T),\\
u(0) &= u_0  & \mbox{ in }  \mathbb{R}^N,
\end{array}\right. 
\end{equation}
and discovered the critical exponent $q^\star= 1 + 2/N$, which determines the blow-up and global existence of the solutions, that is,
\begin{itemize}
  \item If $q>q^\star$, then problem (\ref{eq2.1f}) possesses global nonnegative nontrivial solutions.
  \item If $1 < q < q^\star$, then all nontrivial nonnegative solutions to problem (\ref{eq2.1f}) blow up in finite time.
\end{itemize}


Since this important result was obtained numerous researchers have worked on obtaining Fujita critical exponents considering various settings. For instance, see \cite{QS} for results in the Euclidean scenario. For results involving Riemannian manifolds and locally finite graphs, refer to \cite{BANDLE}, \cite{Zhang1}, \cite{Zhang2}, as well as \cite{HUI}, \cite{LIU}, \cite{GEORGIEV}, \cite{ROSSI}, \cite{Wu1}, \cite{Monticelli1}, and \cite{Lin1}.
Motivated by these results our main objective is to provide a unified approach to obtain global and non-global solutions considering various geometries. In particular, we recover and derive new Fujita-type results considering sub-Riemannian manifolds and locally finite graphs.

Our main result is the following.
\begin{theorem}\label{thm1} Suppose that  $ \ell_i: [0, \infty) \rightarrow [0,\infty)$ is continuous and $f_i : [0,\infty) \rightarrow [0,\infty)$ is locally Lipschitz continuous such that $f_i(0)=0 $ for every $i = 1,\cdots,k.$
\begin{itemize}
 \item[(i)] Assume that the function \( f_i \) satisfies condition \( (H_1) \) for each \( i = 1, \ldots, k \), and there exists a nontrivial $w_0 \in L^p_{+}(X, \mu) \cap L^{\infty}(X, \mu) $ satisfying $ \|w_0\|_\infty \leq \lambda $ and 
 \begin{equation}\label{GLOB1}
 \displaystyle \sum_{i=1}^k\int_{0}^{\infty} \ell_i(\sigma) F_i \left(\|S_L(\sigma)w_0\|_{\infty} \right) d\sigma   < \frac{\beta}{\beta+1}
 \end{equation}
for some $\beta> 0$, where $F_i(\sigma)=\sup_{0<t\leq \sigma} f_i(t)/t$ when $t>0$ and $F_i(0)= \limsup_{s\to 0^+} f_i(s)/s$. Then there exists $ \delta > 0 $ depending on $\beta$ such that for $ u_0 =\delta w_0 $, the solution of problem \eqref{eq2.1} is a global solution.

 \item[(ii)] Let $u_0 \in L^p_{+}(X, \mu)$, $u_0 \neq 0$. Assume that condition $(H_2)$ is verified for some $j_0 \in \{1,\cdots k\}$, $\ell_{j_0}(t)>0$ for $t>0$, and there exists a measurable subset $K$ of $X$ with $\mu(K) \in (0,\infty)$ such that
  \begin{equation} \label{TTa}
  \int_{ \frac{1}{\mu(K)}\int_K S_L ( T ) u _0 d \mu}^{\infty} \frac{d \sigma}{f_{j_0}(\sigma)} \leq \int_{0}^{T} \ell_{j_0}(\sigma) d \sigma,
  \end{equation}
for some $T>0$. Then the solution of problem \eqref{eq2.1} with initial condition $u_0$ is non-global.
\end{itemize}
\end{theorem}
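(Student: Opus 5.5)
For part (i) I would work with the mild formulation
\[
u(t)=S_L(t)u_0+\int_0^t S_L(t-s)\sum_{i=1}^k \ell_i(s)f_i(u(s))\,ds ,
\]
and take local existence and uniqueness of a nonnegative solution in $L^p\cap L^\infty$ as given. The strategy is a comparison/continuation argument with the supersolution candidate $\overline u(t)=(1+\beta)\delta S_L(t)w_0$.

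Fix $\delta\in(0,1/(1+\beta)]$, so that $\|\overline u(t)\|_\infty\le (1+\beta)\delta\|w_0\|_\infty\le\lambda$; here we use that the semigroup is sub-Markovian, hence an $L^\infty$-contraction and positivity preserving. Let $T^*$ be the supremum of the times $t$ on $[0,T_{\max})$ for which $0\le u(s)\le \overline u(s)$ a.e. for all $s\le t$. By continuity at $t=0$ (where $u=\delta w_0<\overline u$ wherever $w_0>0$) I would argue $T^*>0$, possibly working instead with the strict inequality $\|u(s)\|_\infty\le(1+\beta)\delta\|S_L(s)w_0\|_\infty$ defined through norms to avoid pointwise continuity issues.

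On $[0,T^*)$ the monotonicity in $(H_1)$ applies, since all values lie in $(0,\lambda]$, and gives
\[
f_i(u(s))\le f_i(\overline u(s))=\frac{f_i(\overline u)}{\overline u}\,\overline u\le F_i\big(\|\overline u(s)\|_\infty\big)\,\overline u(s)\le F_i\big(\|S_L(s)w_0\|_\infty\big)\,\overline u(s).
\]
The last step uses that $F_i$ is nondecreasing together with the bound $(1+\beta)\delta\le1$. Substituting into the mild formulation and using the semigroup property $S_L(t-s)\overline u(s)=(1+\beta)\delta S_L(t)w_0$, we obtain
\[
u(t)\le \delta S_L(t)w_0\Big(1+(1+\beta)\sum_i\int_0^\infty \ell_i F_i(\|S_L(\sigma)w_0\|_\infty)\,d\sigma\Big)<\delta S_L(t)w_0(1+\beta),
\]
where the strict inequality comes from \eqref{GLOB1}. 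This strict inequality lets us extend past $T^*$, so $T^*=T_{\max}$. Then $\|u(t)\|_\infty$ stays bounded, and by the blow-up alternative $T_{\max}=\infty$.

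The main obstacle is making the continuity/bootstrap argument rigorous in the pointwise a.e. order. I would handle it with the norm-based version: define $T^*$ via $\|u(s)\|_\infty\le\lambda$, and obtain the pointwise bound from the integral inequality as a Gronwall-type comparison. The preceding estimate already shows that $u\le\overline u$ is self-improving given only $u\le\lambda$.

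For part (ii) I would reduce to the argument behind Theorem~\ref{AbstractTHEO}. Suppose the solution is global. Since all terms are nonnegative and $S_L$ is positive, the mild formulation gives
\[
u(t)\ge S_L(t)u_0+\int_0^t \ell_{j_0}(s)S_L(t-s)f_{j_0}(u(s))\,ds .
\]
Apply $S_L(T-t)$ and average over $K$: set $\phi(t)=\frac{1}{\mu(K)}\int_K S_L(T-t)u(t)\,d\mu$. The semigroup property gives
\[
\phi(t)\ge \phi(0)+\int_0^t\ell_{j_0}(s)\,\frac{1}{\mu(K)}\int_K S_L(T-s)f_{j_0}(u(s))\,d\mu\,ds .
\]
Jensen's inequality for the convex $f_{j_0}$ applies twice: once for the sub-Markovian kernel (which is sub-stochastic, and $f_{j_0}(0)=0$ with convexity gives $f(\alpha x)\le\alpha f(x)$ for $\alpha\le1$), and once for the average over $K$. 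Together they yield
\[
\phi(t)\ge\phi(0)+\int_0^t\ell_{j_0}(s)f_{j_0}(\phi(s))\,ds,\qquad \phi(0)=\frac{1}{\mu(K)}\int_K S_L(T)u_0\,d\mu>0 .
\]
For the right-hand side $\psi$ we have $\psi'=\ell_{j_0}f_{j_0}(\phi)\ge\ell_{j_0}f_{j_0}(\psi)$ once we note $f_{j_0}$ is nondecreasing where relevant; this follows from convexity, positivity and $f(0)=0$. Integrating gives
\[
\int_{\phi(0)}^{\psi(t)}\frac{d\sigma}{f_{j_0}(\sigma)}\ge\int_0^t\ell_{j_0}.
\]
Combined with \eqref{TTa} and the strict positivity of $\ell_{j_0}$, this forces $\psi(t)=\infty$ at some $t\le T$. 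At $t=T$ we would need strict inequality, so I would perturb $T$ slightly or use $\ell_{j_0}>0$ to gain strictness. This contradicts the finiteness of $\phi(t)\le\mu(K)^{-1/p}\|u(t)\|_p$.

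The Jensen step for a sub-Markovian (not Markovian) operator needs care. I would take it from the proof of Theorem~\ref{AbstractTHEO}, as the authors do.
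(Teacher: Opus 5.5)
\textbf{Part (ii).} This follows the paper's proof: the same functional $\frac{1}{\mu(K)}\int_K S_L(T-t)u(t)\,d\mu$, the same two Jensen steps, and the same separation of variables. You work with the integrated inequality and the comparison function $\psi$, whereas the paper first shows via \eqref{eq-blo1} that $\Theta$ is absolutely continuous; this is a harmless variant. Two corrections:
\begin{itemize}
\item No extra strictness is needed at $t=T$. From $\int_{\phi(0)}^{\psi(T)}d\sigma/f_{j_0}\ge\int_0^T\ell_{j_0}$ and \eqref{TTa} you get $\int_{\psi(T)}^{\infty}d\sigma/f_{j_0}\le 0$, which is impossible for finite $\psi(T)\le\phi(T)$. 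Perturbing $T$ would in any case be illegitimate, since $T$ also enters the lower limit in \eqref{TTa}.
\item The claim $\phi(0)>0$ must be read off from \eqref{TTa}. Convexity and $f_{j_0}(0)=0$ give $f_{j_0}(s)\le f_{j_0}(1)s$ on $[0,1]$, so $\int_0^1 d\sigma/f_{j_0}(\sigma)=\infty$.
\end{itemize}

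\textbf{Part (i).} Your route differs from the paper's. The paper never works with the maximal solution. It runs the Picard scheme \eqref{GLOB2} and proves by induction that $u^n(t)\le(1+\beta)S_L(t)u_0$, using exactly your supersolution computation. It then notes that the iterates are nondecreasing, by monotonicity of $f_i$ on $[0,\lambda]$, and passes to the limit by dominated convergence. This produces a global solution bounded by $\lambda$, identified with the solution via uniqueness.

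You instead compare the maximal solution of Proposition~\ref{LOCALEXIST} with $\overline u$ and invoke the blow-up alternative. That strategy works, but the step ``this strict inequality lets us extend past $T^*$'' is not justified. An a.e.\ strict inequality gives no quantitative room, and $u$ is only known to be continuous in $L^p$, not in $L^\infty$ or pointwise. So neither $T^*>0$ nor extension past $T^*$ follows from what you wrote.

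The clean fix is a comparison principle, which removes $T^*$ altogether:
\begin{itemize}
\item Your computation shows that $\overline u$ is a supersolution, i.e.\ $\overline u(t)\ge S_L(t)u_0+\sum_i\int_0^t S_L(t-s)\ell_i(s)f_i(\overline u(s))\,ds$. This uses only $\|\overline u\|_\infty\le\lambda$ and no information about $u$.
\item Fix $T<T_{\max}$ and let $L$ be a common Lipschitz constant of the $f_i$ on $[0,\max\{\lambda,\sup_{[0,T]}\|u\|_\infty\}]$. Monotonicity is needed only where $u\le\overline u\le\lambda$, so $f_i(u)-f_i(\overline u)\le L\,(u-\overline u)^+$ a.e.
\item Subtracting and using positivity of $S_L$ gives $(u-\overline u)^+(t)\le C\int_0^t S_L(t-s)(u-\overline u)^+(s)\,ds$.
\item Gronwall in $L^p$ then yields $u\le\overline u$ on $[0,T]$. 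Hence $\|u(t)\|_\infty\le\lambda$ on $[0,T_{\max})$, and the blow-up alternative gives $T_{\max}=\infty$.
\end{itemize}

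Comparing the two routes: yours shows directly that the given maximal solution is global, at the cost of relying on the blow-up alternative and this comparison argument. The paper's iteration is a self-contained existence construction that needs uniqueness only to identify its limit.
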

\begin{remark} ~~
\begin{itemize} 
\item[(i)] The locally Lipschitz continuity condition is used to guarantee the local existence of solutions, as seen in Proposition \ref{LOCALEXIST}. 

\item[(ii)] Condition (\ref{GLOB1}) extends the one found in \cite[Theorem 3 (a)]{Weissler} where it was treated the case $f_i(t)=t^{q}, q>1, t\geq 0, \ell_i=1$ and $u_0\in L^p_{+}(\mathbb{R}^N)$. 

\item[(iii)] We need the additional assumption $u_0\in L^\infty(X, \mu)$ due to the absence of smoothing effect of the sub-Markovian semigroup $\{S_L(t)\}_{t\geq 0}$ which is verified when $L=\Delta$ (see \cite[Proposition 1]{Weissler}). Moreover, the assumption $u_0\in L^p_+(X,\mu)\cap L^\infty(X,\mu)$ was used to establish the existence of a locally essentially bounded mild solution to problem \eqref{SingleAbstract} (as detailed in Section 3 of \cite{ZIMMERMANN}). Therefore, it makes sense  consider similar constraints on the initial data for the problem \eqref{eq2.1}.

\item[(iv)] Theorem \ref{thm1} (ii) broadens the scope of Theorem \ref{AbstractTHEO} by incorporating general time-weighted sources $\ell_i \, (i=1,\cdots,k)$.
\end{itemize}
\end{remark}


We use our main result to analyze the problem (\ref{eq2.1}) considering a sub-Riemannian manifold (Homogeneous Carnot groups), a Riemannian manifold (The hyperbolic space) and locally finite graphs. In all the cases we consider polynomial, logarithmic, and exponential nonlinearities.  Thus, we will be able to identify known Fujita exponents, discover new ones, and reveal new global and non-global results, see Section \ref{Section3}. In summary, we will demonstrate the following:
\begin{itemize}
    \item[(a)] Let $X=\mathbb{G}$ be a Carnot group with  homogeneous dimension $Q$. If either $f_i (u) = u^{q_i}$ or $f_i (u) = (1+u)[\ln(1+u)]^{q_i}$, $q_i> 1$ and $\ell_i(t) = t^{\alpha_i} \, (\alpha_i>-1)$, for $i = 1, \cdots, k$. Then, the value $$q^* = \max_{1 \leq i \leq k} \left\{ 1 + \frac{2+2 \alpha_i}{Q} \right\},$$is the critical exponent in the Fujita sense.  (see, Corollary \ref{coEXTRA}). We also analyze the case  $f_i (u) = e^{ \beta_i u} -1$, $\beta_i>0$ for $i \in \{1, \cdots, k\}$ (see Corollary \ref{coEXP}). It is worth mentioning that if $\mathbb{G}= \mathbb{R}^N$ is the $N$-dimensional Euclidean space then $Q=N $; and if $\mathbb{G} = \mathbb{H}^N$ is the Heisenberg group then $Q=2N+2$. 
    
    \item[(b)]  If $X = \bold{H}^N$ is the $N$-dimensional hyperbolic space, $f_i (u) = u^{q_i}$ or $f_i (u) = (1+u)[\ln(1+u)]^{q_i}$, $q_i>1$,  for every $i=1, \cdots,k$, and $\ell_i(t ) = e^{\alpha_i t} \, (\alpha_i>0)$, the Fujita critical exponent is given by 
    $$q^* = \max_{1 \leq i \leq k} \left\{ 1 + \frac{4 \alpha_i}{(N-1)^2} \right\}, $$
    see Corollary \ref{co2}.
\item[(c)]   Similar results to those presented in items (a) and (b) hold if \(X = G = (V,E)\) is a simple, locally finite, connected graph satisfying the \(CDE'(n,0)\) condition along with a given volume growth condition (see Corollary \ref{co-graf1} for more details).

\end{itemize}


The remainder of the paper is structured as follows. Section \ref{prel} provides the necessary preliminaries. The proof of Theorem \ref{thm1} is presented in Section \ref{glob-non-glob}. Lastly, Section \ref{Section3} discusses some applications.

\section{Preliminaries}\label{prel}

Note that for every $i=1\cdots k$, $f_i: [0,\infty) \to [0,\infty)$ is given. For simplicity, we will also use $f_i$ to denote the operator of composition with $f_i$, defined by
\begin{eqnarray*}
D(f_i) & := & \left\{ \varphi \in L^p_{+}(X, \mu);  f_i \circ \varphi \in  L^p(X, \mu) \right\} \\
f_i(\varphi) &:= & f_i \circ \varphi .
\end{eqnarray*}

The concept of solution of problem (\ref{eq2.1}) is the following.
\begin{definition} \label{MILD1}  Let $u_0\in L^{p}_{+}(X,\mu)$. A function $u:[0,T) \rightarrow L^{p}_{+}(X,\mu)$ is a mild solution (or simply a solution) of problem (\ref{eq2.1}) in $[0,T) $, if for $i=1,...,k$,  $u(t)\in D(f_i)$ for all $t \in (0,T)$, $f_i \circ u \in L^1((0,t), L^p(X,\mu)\ )$ for every $t \in (0,T)$, and $u$ satisfies the variation of constants formula
\begin{equation*}
\begin{array}{rll}
u(t) &=& S_{L}(t)u_0 + \displaystyle \sum_{i=1}^{k} \displaystyle  \int_{0}^{t} S_{L}(t - \sigma)\ell_i(\sigma)f_i(u(\sigma))d\sigma, 
\end{array}
\end{equation*}
for all $t \in [0,T)$. { This solution is said to be maximal if it cannot be extended to an interval that is strictly larger than $[0,T)$. In this case, we write $T=T_{\max}$}.  Also, if $u$ is a mild solution of (\ref{eq2.1})  in $[0,\infty)$, then we call $u$ a global solution of (\ref{eq2.1}).
\end{definition}

\begin{definition}[Locally essentially bounded solution] A solution $u$ of problem (\ref{eq2.1}) is said to be locally essentially bounded if $u(t) \in L^{\infty}(X,\mu)$ for every $t \in [0,T_{\max})$ and $\sup_{t\in [0,T]} \{\|u(t)\|_{\infty}\}$ is finite for every $T \in [0,T_{\max})$.
\end{definition}

The following local existence result can be obtained following similar ideas of \cite[Theorem 3.1]{ZIMMERMANN}.
\begin{prop}{(Local existence)} \label{LOCALEXIST}  Let $\ell_i \in C[0,\infty)$ satisfying $\displaystyle\sum_{i=1}^k \lim_{t \rightarrow 0^+} \displaystyle \int_0^t \ell_i(\sigma) d\sigma = 0$ and $f_i \in C[0,\infty)$ locally Lipschitz continuous with $f_i(0)=0$ for $i=1, \cdots, k$. Then for every $u_0 \in L^p_+(X, \mu) \cap L^{\infty}(X, \mu) $, $u_0 \neq 0$, there exists  a unique maximal locally essentially bounded mild solution $u$ of problem \eqref{eq2.1} defined on $[0,T_{\max})$, for some $0<T_{\max} \leq \infty$. Moreover, if $T_{max} < \infty $, then we have
$$ \lim_{t \rightarrow T_{\max}} \|u(t)\|_{\infty} = \infty.$$
\end{prop}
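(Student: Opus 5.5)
The plan is a Banach fixed-point argument in the space $E_T := L^\infty((0,T); L^p(X,\mu)\cap L^\infty(X,\mu))$ of functions $u:[0,T]\to L^p_+\cap L^\infty$. We equip it with the norm $\|u\|_{E_T}:=\sup_{t\in[0,T]}(\|u(t)\|_p+\|u(t)\|_\infty)$ and restrict to the closed ball $B_R$ of nonnegative functions with $\|u\|_{E_T}\le R$, where $R := 2(\|u_0\|_p+\|u_0\|_\infty)$. On $B_R$ we consider the map
\[
\Phi(u)(t)=S_L(t)u_0+\sum_{i=1}^k\int_0^t S_L(t-\sigma)\ell_i(\sigma)f_i(u(\sigma))\,d\sigma .
\]
Sub-Markovianity gives three facts. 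First, $S_L(t)$ is positive, so $\Phi$ preserves nonnegativity, since $f_i\ge 0$ and $\ell_i\ge 0$. Second, $S_L(t)$ is $L^\infty$-contractive on $L^p\cap L^\infty$. Third, $S_L(t)$ is $L^p$-contractive. Let $M_R$ be the Lipschitz constant of all the $f_i$ on $[0,R]$. Since $f_i(0)=0$, we have $0\le f_i(v)\le M_R v$ pointwise for $0\le v\le R$. This yields $\|f_i(u(\sigma))\|_q\le M_R\|u(\sigma)\|_q$ for $q=p,\infty$, so $u(\sigma)\in D(f_i)$.

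Combining these facts gives
\[
\|\Phi(u)\|_{E_T}\le \|u_0\|_p+\|u_0\|_\infty + M_R R\sum_i\int_0^T\ell_i ,
\]
\[
\|\Phi(u)-\Phi(v)\|_{E_T}\le M_R\sum_i\int_0^T\ell_i\,\|u-v\|_{E_T}.
\]
By the hypothesis $\sum_i\int_0^t\ell_i\to0$, we can choose $T$ small so that $M_R\sum_i\int_0^T\ell_i\le 1/2$. Then $\Phi$ is a contraction of $B_R$ into itself and has a unique fixed point. Measurability and Bochner integrability of $\sigma\mapsto S_L(t-\sigma)\ell_i(\sigma)f_i(u(\sigma))$ in $L^p$ need some care. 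To secure them I would work in the subspace of $C([0,T];L^p)$ with the $L^\infty$ bound. Here strong continuity of $S_L$ in $L^p$ and the Lipschitz property of $f_i$ in $L^p$ make $\Phi(u)$ continuous, and the pointwise $L^\infty$ bound is preserved because the $L^\infty$-ball is closed in $L^p$ under a.e.-convergent subsequences.

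Uniqueness among locally essentially bounded solutions comes from a Gronwall argument. Let $u,v$ be two such solutions on $[0,T]$ with common $L^\infty$ bound $R'$. Then
\[
\|u(t)-v(t)\|_p\le M_{R'}\int_0^t\sum_i\ell_i(\sigma)\|u(\sigma)-v(\sigma)\|_p\,d\sigma ,
\]
and Gronwall forces $u=v$. We then glue local solutions and take the union of all existence intervals to obtain a maximal solution on $[0,T_{\max})$.

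For the blow-up alternative, suppose $T_{\max}<\infty$ but $\liminf_{t\to T_{\max}}\|u(t)\|_\infty<\infty$. The main obstacle is getting from a liminf to a limit. The key point is that the local existence time $T$ above depends only on an upper bound for $\|u_0\|_\infty$. Indeed, the $L^p$ norm enters only linearly, so the smallness condition on $T$ can use $M_{2\|u_0\|_\infty}$ if we track the $L^p$ and $L^\infty$ norms separately, and $T$ is uniform over the compact interval $[0,T_{\max}+1]$ where the $\ell_i$ are bounded. Precisely, I would redo the contraction with $R_\infty=2\|u_0\|_\infty$ for the $L^\infty$ part and the $L^p$ part handled by Gronwall-type linear growth. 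Now take $t_n\to T_{\max}$ with $\|u(t_n)\|_\infty\le C$. Restarting at $t_n$ with data $u(t_n)$, with the $\ell_i$ shifted, extends the solution by a fixed $\tau(C)>0$. This exceeds $T_{\max}$ for large $n$ and contradicts maximality. Hence $\|u(t)\|_\infty\to\infty$.
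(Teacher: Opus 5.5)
Your proposal is correct and is essentially the standard Banach fixed-point argument that the paper defers to (it gives no proof of its own and only cites \cite[Theorem 3.1]{ZIMMERMANN}). This includes the point needed for the blow-up alternative: once the $L^p$ and $L^\infty$ bounds are tracked separately, the existence time depends only on an $L^\infty$ bound for the data and on $\sup\ell_i$ over a bounded interval.

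One small correction: sub-Markovianity gives positivity and $L^\infty$-contractivity, not $L^p$-contractivity of $S_L(t)$. Replace the constant $1$ in your $L^p$ estimates by $M_0:=\sup_{0\le t\le 1}\|S_L(t)\|_{L^p\to L^p}<\infty$, work with existence times at most $1$, and take the $L^p$ radius to be $2M_0\|u_0\|_p$. Nothing else in your argument changes.
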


\section{Global and non-global existence}\label{glob-non-glob}

In this section, we prove Theorem \ref{thm1}.

\begin{proof}[Proof of Theorem \ref{thm1} (i).]  Assume that condition (\ref{GLOB1}) is satisfied.

Take $\delta=1/(1+\beta)$ and  $u_0 = \delta w_0 \in   L^p_{+}(X, \mu) \cap L^{\infty}(X, \mu) $. Now we define the sequence $\{ u^n\}_{n \in \mathbb{N}}$ by $u^0(t) = S_L(t)u_0 $ and
\begin{equation} \label{GLOB2}
u^n(t) = S_L(t)u_0 + \displaystyle\sum_{i=1}^k\displaystyle \int_0^t S_L(t - \sigma) \ell_i(\sigma)f_i(u^{n-1}(\sigma)) d \sigma
\end{equation}
for $t\geq 0.$

We claim that 
\begin{equation} \label{Claim11}
u^{n}(t) \leq (1+\beta) S_L(t)u_0
\end{equation}
for $n \geq0 $ and $t\geq 0.$ Indeed,  by induction on $n$, we see \eqref{Claim11} is clear for $n=0 $. Consider now that \eqref{Claim11} holds for some $n \in \mathbb{N}$. Since 
$$  (1+ \beta)S_L(t)u_0  = S_L(t)w_0 \leq \lambda S_L(t)  \dfrac{w_0}{ \| w_0 \|_{\infty}}   \leq \lambda$$ 
(by sub-Markovian property), we have
$ \| (1+ \beta)S_L(t)u_0 \|_{\infty} = \| S_L(t)w_0 \|_{\infty} \leq  \lambda$ 
for $t>0$. Due to the property \( (H_1) \) and since \( f_i(\sigma) \leq F_i(\sigma) \sigma \) for \( \sigma \geq 0 \), it follows that:
\begin{eqnarray*}
u^{n+1}(t) & \leq & S_L(t)u_0 + \displaystyle\sum_{i=1}^k\displaystyle \int_0^t S_L(t - \sigma)  \ell_i(\sigma)f_i(u^{n}(\sigma) )  d \sigma\\
 & \leq &
 S_L(t)u_0  + \displaystyle\sum_{i=1}^k \int_0^t \ell_i(\sigma) S_L(t - \sigma)  f_i((1+\beta) S_L(\sigma)u_0) d \sigma\\
& \leq &  S_L(t)u_0 +  \displaystyle\sum_{i=1}^k \int_0^t  \ell_i(\sigma) S_L(t - \sigma)  F_i((1+\beta) S_L(\sigma)u_0)   \left[(1+\beta) S_L(\sigma)u_0 \right]  d \sigma
 \\
& \leq  &  S_L(t)u_0 + (1+\beta) S_L(t)u_0  \displaystyle\sum_{i=1}^k \int_{0}^{t} \ell_i(\sigma) F_i \left(\| S_L(\sigma)w_0 \|_{\infty} \right )d\sigma \\
& \leq &  S_L(t)u_0 + (1+\beta) S_L(t)u_0 ~\frac{\beta}{\beta + 1}  \leq (1 + \beta)  S_L(t)u_0.
\end{eqnarray*}
Thus, (\ref{Claim11}) holds for all $n\geq 0$.

Note that estimate (\ref{Claim11}) and the fact that the functions $f_i \,\,  (i=1,...,k)$ are nonnegative and nondecreasing imply that the sequence $\{ u_n(\sigma)\}_{n \in \mathbb{N}}$ is nondecreasing and dominated in $L^p(X, \mu)$ by $(1+\beta) S_L(t)u_0$. Therefore, according to the dominated convergence theorem, there exists
$$ U(\sigma) = \lim_{n \to \infty} u^n(\sigma)  \, \, \mbox{ in } L^p(X, \mu), $$
for $\sigma \in [0, \infty).$ 

We will now show that $U: [0, \infty) \rightarrow L^p(X, \mu)$ is the desired global solution.  Note that, $U(\sigma) \in L^p(X, \mu) \cap L^{\infty}(X, \mu) $ for every $\sigma>0$. In fact, from (\ref{Claim11}) we have
$$ \| u^n(t)\|_{\infty} \leq \| S_L(t)w_0 \|_{\infty} \leq \| w_0 \|_{\infty}< \lambda, \mbox{ for } t > 0, $$
(so $\sup_{t>0} \|U(t) \|_{\infty}\leq  \lambda$). 
From this, and since $f_i$ is locally Lipschitz (hence Lipschitz on the interval $(0,\lambda]$ with Lipschitz constant $M_i>0$), we have
$$
    \begin{array}{ll}
      & \left \|  S_L(t - \sigma) \, \ell_i (\sigma)[f_i(u^{n}(\sigma)) -f_i(U(\sigma)) ]  \right\|_p \\ 
      &  \leq M_i \, \ell_i(\sigma) \left( \displaystyle \sup_{0<\sigma<t}\|S_L(\sigma)\| \right)  \|u^n(\sigma) - U(\sigma)\|_p \to 0
    \end{array}
$$
when $n \rightarrow \infty $, for $\sigma \in (0,t)$ and $i=1,..,k$. Hence 
$$\lim_{n \rightarrow \infty} S_L(t - \sigma) \, \ell_i(\sigma) \, f_i(u^n(\sigma))  =  S_L(t - \sigma) \, \ell_i(\sigma) \, f_i(U(\sigma))  \mbox{ in } L^p(X, \mu),$$
for $\sigma \in (0,t)$ and $i=1,...,k$. Also, from (\ref{Claim11}) and condition (\ref{GLOB1}), the functions $$ \sigma \mapsto S_L(t - \sigma) \, \ell_i(\sigma) \, f_i(u^n(\sigma)) \in L^{1}((0,t),L^p(X, \mu)),$$ for $i=1,...,k$, and are dominated by the functions $$ \sigma \mapsto (1+\beta) \, \ell_i(\sigma) \, S_L(t)u_0 \, F_i(\| S_L(\sigma)w_0 \|_\infty ) \in L^{1}((0,t),L^p(X, \mu)  ),$$ 
for $i=1,...,k$. Therefore, by the dominated convergence theorem for $L^p(X, \mu)$-valued functions, we obtain
$$ \lim_{n \rightarrow \infty} \sum_{i=1}^{k}\int_{0}^{t} S_L(t - \sigma) \, \ell_i(\sigma) \, f_i(u^n(\sigma)) d\sigma = \sum_{i=1}^{k}\int_{0}^{t} S_L(t - \sigma) \, \ell_i(\sigma) \, f_i(U(\sigma)) d\sigma,$$
and the result follows letting $n \to \infty$ in the equation (\ref{GLOB2}).
\end{proof}

\begin{proof}[Proof of Theorem \ref{thm1} (ii).]

Assume that condition (\ref{TTa}) holds. We argue by contradiction. Suppose that there exists a nontrivial mild solution $u$ defined in $[0, \infty)$ with initial condition $u(0) \neq 0$, $u(0) \in L^p_{+}(X, \mu)$. Let $\nu = [1/\mu(K)] \, \mu $ with $0< \mu(K) < \infty$. For any $\tau>0$, define the function $\Theta:[0,\tau] \rightarrow \mathbb{R}$ by
$$\Theta( t ) = \int_K S_L ( \tau - t ) u ( t ) d \nu,$$
for $0\leq t\leq \tau$. From the definition of a mild solution \ref{MILD1}, we derive the following equality based on the fact that the bounded linear operator $S_L(t)$ commutes with Bochner integrals \cite[Theorem 3.1.7]{HSING} and the Fubini theorem \cite[Theorem 8.8]{RUDIN}.
\begin{equation}\label{eq-blo1}
    \Theta(t+h) - \Theta(t) 
 = \int_t^{t+h}\sum_{i=1}^k  \ell_i(\sigma)  \left[ \int_K  S_L(\tau  -\sigma ) f_i(u(\sigma)) d \nu \right] d\sigma.
\end{equation}

Due to \cite[Lemma 2.2]{ZIMMERMANN}, the function $\psi(\tau):= \int_{\tau}^{\infty} 1/f_{j_0}(s) ds$ is bijective and continuously differentiable with $\psi'(\tau)<0$ (and so $[\psi^{-1}]'(\tau)<0$), therefore by condition (\ref{TTa}),
$$\Theta(0) = \int_K S_L ( \tau ) u_0 d \nu >  \psi^{-1} \left( \int_{0}^{\tau} \ell_{j_0}(\sigma) ~ d\sigma \right)>0.$$
Thus, by the equation \eqref{eq-blo1}, $ \Theta(t) > 0$ for $t\in [0,\tau]$.

Additionally, it should be noted that due to equality \eqref{eq-blo1}, $\Theta$ is absolutely continuous and thus differentiable almost everywhere in $[0,\tau]$ with
$$ \Theta'(t) = \sum_{i=1}^k \ell_i(t) \int_K S_L(\tau - t) f_i(u(t)) \, d\nu, $$
(see e.g. Theorem 7.18 in \cite{RUDIN}). 
Now, using Jensen inequality (see \cite[Lemma 4.4]{ZIMMERMANN}) we obtain
\begin{multline*}
   \Theta'(t) \geq \ell_{j_0}(t)  \int_K  S_L(\tau  - t ) f_{j_0}(u(t)) d \nu \geq \ell_{j_0}(t)  \int_K  f_{j_0}\left( S_L(\tau  - t ) u(t) \right) d \nu \\
   \geq \ell_{j_0}(t) \, f_{j_0}\left( \int_K   S_L(\tau  - t ) u(t)  d \nu \right) = \ell_{j_0}(t) f_{j_0}(\Theta(t)), 
\end{multline*}
for almost all $t \in [0, \tau]$. In particular, it follows that $\Theta$ is monotonically increasing function on $[0, \tau]$, since $\ell_{j_0}(t), f_{j_0}(t) > 0$ for $t>0$. Hence, 
\begin{eqnarray*}
\int_{ \int_K S_L ( \tau ) u _0 d \nu}^{\infty} \frac{d \sigma}{f_{j_0}(\sigma)} &\geq &   \int_{ \Theta(0) }^{\Theta(\tau)} \frac{d \sigma}{f_{j_0}(\sigma)} =  \int_{ \Theta(0) }^{\infty} \frac{d \sigma} {f_{j_0}(\sigma)} -  \int_{ \Theta(\tau) }^{\infty} \frac{d \sigma}{f_{j_0}(\sigma)}  \\
&=& - \int_{0}^{\tau}  \left(  \int_{ \Theta(t) }^{\infty} \frac{d \sigma}{f_{j_0}(\sigma)} \right)'dt  \\\
&=& \int_{0}^{\tau} \frac{\Theta'(t)}{f_{j_0} \left( \Theta(t) \right)}~ dt ~ \geq ~ \int_{0}^{\tau} \ell_{j_0}(t) ~ dt.
\end{eqnarray*}
This contradicts condition \eqref{TTa}.

\end{proof}

\section{Applications and Fujita critical exponents }\label{Section3}

\subsection{Applications on sub-Riemannian manifolds}

We begin by reviewing fundamental concepts of sub-Riemannian geometry. For more details,  see \cite{BONFIGLIOLI}, \cite{Ovidiu}, and \cite{Davide}.

A sub-Riemannian manifold is a real manifold $(M,\mathcal{H},g)$ of dimension $N$ together with a nonintegrable distribution $\mathcal{H}$ of rank $k$ (where $k \leq N$ and $\mathcal{H}$ is a vector subbundle of $TM$). It is equipped with a sub-Riemannian metric $g$, which is an application $g_p : \mathcal{H}_p \times \mathcal{H}_p \rightarrow [0 , \infty)$ for all $p \in M$ (where $\mathcal{H}_p$ is a vector subspace of $T_pM$). This application is a positive definite, nondegenerate, inner product.

A curve $\gamma: [0, T] \rightarrow M$ is considered a horizontal curve if $\gamma'(t) \in \mathcal{H}_{\gamma(t)}$ for all $t \in [0, T]$. The metric $g$ can also be used to define the length of horizontal curves $\gamma: [0, T] \rightarrow M$ using the formula
$$L(\gamma) = \displaystyle \int_0^T [g(\gamma'(t), \gamma'(t) )]^{1/2} dt. $$

The sub-Riemannian Carnot-Caratheodory distance between two points $P$ and $Q$ on $M$ for which there is a horizontal curve joining them is defined as
$$ d_{CC}(P, Q) = \inf  \left \{ L(\gamma) \,; \, \gamma \mbox{ is a horizontal curve joining } P \mbox{ and } Q \right \}. $$

Keep in mind that in a sub-Riemannian manifold, it is not always certain that there is a straight path connecting two points. The Chow-Rashevskii connectivity theorem (see \cite[Theorem 3.5.2]{Ovidiu}) guarantees the existence of such path, but only if two extra conditions are met: the manifold $M$ is connected, and the distribution $\mathcal{H}$ satisfies the Hörmander condition (or bracket-generating condition).

The sub-Laplacian associated with the sub-Riemannian structure  is the natural generalization of the Laplace–Beltrami operator defined on a Riemannian manifold and is represented by
$$\Delta_M = \sum_{i=1}^k X_i^2 +Y,$$
where $X_1, \cdots, X_k$ are linearly independent real vector fields that are bracket generating (i.e. they together with all brackets span $T_pM$ for all $p \in M$), and $Y$ is any real vector field. 

If $M$ is a complete manifold in the sub-Riemannian metric associated with $\Delta_M$ and has a smooth volume denoted by $\mu$, the results from \cite{STRICHARTZ} demonstrate that $\Delta_M$ is essentially self-adjoint on the domain $C^{\infty}_c(M)$. Additionally, the heat semigroup $\{S_M(t)\}_{t \geq 0}$ associated with $\Delta_M$ is contractive on all $L^p(X, \mu)$ spaces, where $1 \leq p \leq \infty$. Furthermore, $S_M(t)$ is described by a positive $C^{\infty}(M)$ kernel and $S_M(t)f$ provides the unique solution to the Cauchy problem $u_t - \Delta_M u =0$, $u(0) =f$, under the assumption that $\|u( \cdot, t) \|_p$ is uniformly bounded for $1 < p < \infty$.

A basic example of a sub-Riemannian manifold is a typical Riemannian manifold whose distribution $\mathcal{H}$ matches the tangent bundle $TM$ (i.e., $k=N$). Examples of sub-Riemannian manifolds include $\mathbb{R}^N$ and the hyperbolic space $\mathrm{H}^N$, which are Riemannian manifolds. However, there are "pure" sub-Riemannian structures (i.e., sub-Riemannian manifolds that are not Riemannian manifolds), such as connected unimodular Lie groups like the Heisenberg group, B-groups, or K-type Groups, and more generally homogeneous Carnot groups (see \cite{BONFIGLIOLI}).

\subsection*{Carnot Groups}

The Lie group on $\mathbb{R}^N$,  $\mathbb{G} = (\mathbb{R}^N,\circ , \delta_{\lambda})$ is called a homogeneous Carnot group (or a homogeneous stratified group), of step $r$ and $N_1$ generators, if the following properties hold:
   \begin{enumerate}
       \item[(i)] $\mathbb{R}^N$ can be split as $\mathbb{R}^N = \mathbb{R}^{N_1} \times \cdots \times \mathbb{R}^{N_r} $, and the dilation $\delta_{\lambda} : \mathbb{R}^N \rightarrow \mathbb{R}^N$ given by
       $$ \delta_{\lambda}(x) = \delta_{\lambda}(x^{(1)},\cdots, x^{(r)}) =  (\lambda x^{(1)}, \lambda^2 x^{(2)} ,\cdots, \lambda^{r} x^{(r)}), \quad x^{(i)} \in \mathbb{R}^{N_i}  $$
is an automorphism of the group $\mathbb{G}$ for every $\lambda > 0$.
\item [(ii)]{(Hörmander condition)}  If $N_1$ is as above, let $Z_1 , \cdots , Z_{N_1}$ be the left invariant vector fields on $\mathbb{G}$ such that $Z_j(0) = \partial/ \partial x_j\lvert_0$ for $j = 1,\cdots,N_1$. Then 
$$\mbox{dim}( \mbox{span} \{Z_J(x)\, ; J = (j_1, \cdots, j_k) \in \{1, \cdots, N_1\}^k\,\}, \, k \in \mathbb{N} ) = N,$$
for every  $x\in \mathbb{R}^N$ where $Z_J$ is a commutator of length $k$ of $Z_1, \cdots ,Z_{N_1}$.
   \end{enumerate}
The vector fields $Z_1,...,Z_{N_1}$  are called the (Jacobian) generators of $\mathbb{G}$, whereas any basis for the $\mbox{span} \{Z_1,...,Z_{N_1}\} $ is called a system of generators of $\mathbb{G}$.

Homogeneous Carnot groups have a stratified Lie algebra \(\mathfrak{g}\) which can be decomposed into a direct sum of vector spaces: \(\mathfrak{g} = V_1 \oplus V_2 \oplus \cdots \oplus V_r\), where \([V_1, V_j] = V_{j+1}\) for \(1 \leq j < r\) and \([V_1, V_r] = \{0\}\), where $[\cdot , \cdot ]$ represents the Lie-bracket. The first layer, \(V_1 = \{Z_1,\cdots,Z_{N_1}\}\), generates the entire Lie algebra through the commutator bracket. The sub-Laplacian is the second-order differential operator constructed from vector fields that span the first layer \(V_1\), i.e.,
\begin{equation}\label{sub-laplaciano}
    \Delta_{\mathbb{G}}  = \sum_{j=1}^{N_1} Z_j^2.
\end{equation}
These sub-Laplacians are hypoelliptic, due to H\"ormander theorem \cite{Lars}. We will consider the homogeneous norm in $\mathbb{G}$ defined by 
$$
   \lvert \eta \rvert_{ \mathbb{G} } = \left( \sum_{j=1}^r \lvert x^{(j)} \rvert^{2/j}  \right)^{\frac{1}{2}}, \quad \eta = (x^{(1)}, \cdots, x^{(r)} ) \in \mathbb{G}.
$$
Let $d_{CC}( \eta , \xi)$ be the Carnot-Carathéodory distance in $\mathbb{G}$. Then $d_0(x) = d_{CC}(x,0)$ is a symmetric homogeneous norm on $\mathbb{G}$ and satisfies the pseudo-triangle inequality (see \cite[Proposition 5.1.8 and Theorem 5.2.8]{BONFIGLIOLI}). Since all homogeneous norms in $\mathbb{G}$ are equivalent (see \cite[Proposition 5.1.4]{BONFIGLIOLI}), the norm $\lvert \eta \rvert_{ \mathbb{G} }$ is equivalent to the Carnot-Carathéodory norm $d_0$. Additionally, the norm $\lvert \eta \rvert_{ \mathbb{G} } $ satisfies the pseudo-triangle inequality, which means that there exists a constant $k > 0$ such that: 
$$
    \lvert \eta \circ \xi \rvert_{ \mathbb{G} } \leq k \left(  \lvert \eta \rvert_{ \mathbb{G} } +  \lvert  \xi \rvert_{ \mathbb{G} }\right) \qquad \mbox{ for all }  \eta , \xi \in \mathbb{G}.
$$

The Lebesgue measure on $\mathbb{R}^N$ is the Haar measure for $\mathbb{G}$ (see \cite[Proposition 1.3.21]{BONFIGLIOLI}). We denote this measure by $\mu$. The Lebesgue measure is $\delta_{\lambda}$-homogeneous with respect to the dilations
$\{ \delta_{\lambda} \}_{\lambda>0}$. More precisely
\begin{equation}\label{homog-haar}
   \mu( \delta_{\lambda}(E) )  = \lambda^Q \mu (E),
\end{equation}
where $E \subset \mathbb{R}^N$ is a Lebesgue measurable set, and $Q = \sum_{j=1}^r j \cdot N_j.$ The number $Q$ is known as the homogeneous dimension of the homogeneous Carnot group $\mathbb{G}$.

Let \(\{S_{\mathbb{G}}(t)\}_{t \geq 0}\) be the symmetric sub-Markovian semigroup associated with the sublaplacian in \eqref{sub-laplaciano}. It is well known that the semigroup \(\{S_{\mathbb{G}}(t)\}_{t \geq 0}\) admits a right convolution kernel \(h_t\) such that 
$$ S_{\mathbb{G}}(t) u (\eta) = \int_{\mathbb{G}} u (\xi) h_t (\xi ^{-1} \circ \eta ) d\mu(\xi). $$
The kernel $h_t(\eta)$ is a smooth and positive function for $(t,\eta) \in (0, \infty) \times \mathbb{G}$, where $\| h_t \|_{L^1(\mathbb{G})} = 1$ for any $t>0$. This function satisfies the heat equation $\partial_t - \Delta_{\mathbb{G}} = 0$ (see \cite{SALOFF}). In addition, by combining the homogeneous structure (see \cite[Theorems IV. 4.2 and IV.4.3]{SALOFF}) and the polynomial growth of the homogeneous Carnot group $\mathbb{G}$ (given by \eqref{homog-haar}), we obtain the following Gaussian estimates for the heat kernel.
\begin{equation}\label{Carnotkernel}
	c_0t^{-\frac{Q}{2}} \exp \left(  - \dfrac{C_1 \lvert \eta \rvert_{\mathbb{G}}^2}{t} \right) \leq h_t(\eta) \leq C_0t^{-\frac{Q}{2}} \exp \left(  - \dfrac{c_1 \lvert \eta \rvert_{\mathbb{G}}^2}{t} \right) 
\end{equation}
for any $t > 0$ and $\eta \in \mathbb{G}$, where $c_0,C_0, c_1, C_1$ are positive constants. 


Let $ u_0 \in L_{+}^p(X, \mu)$ ($1 \leq p \leq \infty$), and $u_0 \neq 0$ on a ball of $\mathbb{G}$ (specifically on the ball $B_{\mathbb{G}}(\zeta, \delta)$, $\zeta \in \mathbb{G}, \delta>0$). Then, from the estimate \eqref{Carnotkernel} and Fatou’s lemma, we have
\begin{eqnarray*}
\liminf_{t \rightarrow \infty} t^{\frac{Q}{2}} S_{\mathbb{G}}(t)u_0(\eta) & \geq & \liminf_{t \rightarrow \infty} c_0 \int_{\mathbb{G}} \exp \left(  - \dfrac{C_1 \lvert \xi ^{-1} \circ \eta  \rvert_{\mathbb{G}}^2}{t} \right) u_0(\xi) \, d \mu(\xi) \\
& \geq &  \liminf_{t \rightarrow \infty} c_0 \int_{B_{\mathbb{G}}(\zeta, \delta)} \exp \left(  - \dfrac{C_1 \lvert \xi ^{-1} \circ \eta  \rvert_{\mathbb{G}}^2}{t} \right) u_0(\xi) \, d \mu(\xi) \\
& \geq & c_0\int_{B_{\mathbb{G}}(\zeta, \delta)} u_0(\xi) \, d \mu(\xi).   
\end{eqnarray*}
Thus, choosing $0 < C < c_0\int_{B_{\mathbb{G}}(\zeta, \delta)} u_0(\xi) d \mu(\xi)$, we obtain

  \begin{align}\label{lower-est-Heisenberg}
          S_{\mathbb{G}}(t)u_0 \geq C \, t^{-Q/2},
  \end{align}
 for $t$ sufficiently large. On the other hand, if $w_0 \in C_c(\mathbb{G})$, from the upper bound Gaussian estimate in \eqref{Carnotkernel}, we obtain
\begin{equation}\label{upper-est-heisenb}
    \|S_{\mathbb{G}}(t)w_0\|_{\infty } \leq C t^{-\frac{Q}{2}}
\end{equation}
for $t>0$ and some positive constant $C$. Here $C_c(\mathbb{G})$ is the set of continuous functions with compact support.

Based on the information above, we can apply Theorem \ref{thm1} in this context. We will begin by using Theorem \ref{thm1} to address problem \eqref{eq2.1} with a polynomial nonlinearity of the form 
$f_i(u) = u^{q_i},$
where $q_i>1$ for $i = 1,\cdots, k$.

In the work by Andrea Pascucci in \cite{Pascucci1}, the problem $u_t - \Delta_{\mathbb{G}} u = u^q$ was examined for the Carnot groups. It was shown that the critical exponent in this case, according to Fujita, is given by $q^* = 1+ 2/Q$, where $Q$ is the homogeneous dimension of $\mathbb{G}$. See also \cite{Ruzhansky1} and \cite{GEORGIEV} for other related advances.

The following result shows that for $f_i (u) = u^{q_i} \, (i=1,...,k)$ (or $f_i (u) = (1+u)[\ln(1+u)]^{q_i}$), the critical exponent is given by $$q^* = \max_{1 \leq i \leq k} \left\{ 1 + \frac{2+2 \alpha_i}{Q} \right\}. $$
\begin{cor}\label{coEXTRA} Let $X = \mathbb{G}$, $f_i (u) = u^{q_i} \, (q_i>1)$ (or $f_i (u) = (1+u)[\ln(1+u)]^{q_i}$), and $\ell_i(t) = t^{\alpha_i} \, (\alpha_i>-1)$, for $i = 1, \cdots, k.$ 
\begin{itemize}  
  \item[(i)] If \, $1 < q_i < 1 + {(2+2 \alpha_i)}/{Q} $ for some $i = 1,\cdots, k$. Then the problem $\eqref{eq2.1} $ has no nontrivial global solutions.
  \item[(ii)] If \, $q_i > 1 + {(2 +2 \alpha_i)}/{Q}$ for all $i = 1, \cdots, k$, then there is a nontrivial global solution to problem \eqref{eq2.1}.
\end{itemize}
\end{cor}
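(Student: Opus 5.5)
Both parts follow from Theorem \ref{thm1}, combined with the heat kernel bounds \eqref{lower-est-Heisenberg} and \eqref{upper-est-heisenb}.

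\textbf{Part (ii).} Take a nontrivial $w_0 \in C_c(\mathbb{G})$ with $w_0 \geq 0$ and $\|w_0\|_\infty \leq \lambda$. For $f_i(u)=u^{q_i}$, condition $(H_1)$ holds with any $\lambda$. We have $F_i(\sigma)=\sigma^{q_i-1}$, and $F_i$ is nondecreasing.

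For $f_i(u)=(1+u)[\ln(1+u)]^{q_i}$, the function is increasing and convex near $0$, with $f_i(s)/s \to 0$ as $s \to 0^+$. Hence $F_i(\sigma)\leq C\sigma^{q_i-1}$ for $\sigma\leq\lambda$, using $\ln(1+u)\leq u$. In both cases
$$F_i(\|S_{\mathbb{G}}(\sigma)w_0\|_\infty)\leq C\min\{1,\sigma^{-Q/2}\}^{q_i-1},$$
since $\|S_{\mathbb{G}}(\sigma)w_0\|_\infty\leq \|w_0\|_\infty$ by contractivity and \eqref{upper-est-heisenb} holds. Then
$$\int_0^\infty \sigma^{\alpha_i}F_i(\cdots)\,d\sigma \leq C\int_0^1\sigma^{\alpha_i}\,d\sigma + C\int_1^\infty \sigma^{\alpha_i-Q(q_i-1)/2}\,d\sigma.$$
This is finite because $\alpha_i>-1$ and $Q(q_i-1)/2>\alpha_i+1$.

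The integral is finite but not necessarily smaller than $\beta/(\beta+1)$, so I rescale: replace $w_0$ by $\varepsilon w_0$. This multiplies the $i$-th integral by at most $C\varepsilon^{q_i-1}$, using the bound $F_i(\varepsilon s)\leq C(\varepsilon s)^{q_i-1}$ on $[0,\lambda]$. Choosing $\varepsilon$ small makes the left side of \eqref{GLOB1} smaller than $1/2=\beta/(\beta+1)$ with $\beta=1$, and Theorem \ref{thm1}(i) then gives a global solution.

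\textbf{Part (i).} Let $i=j_0$ be an index with $q_{j_0}<1+(2+2\alpha_{j_0})/Q$. For $u^{q}$, convexity, positivity and the Osgood condition in $(H_2)$ are immediate. For $(1+u)[\ln(1+u)]^{q}$ with $q>1$, I need to check convexity; if it fails near $0$, I would pass to a convex minorant $g\leq f_{j_0}$ that is still Osgood, and the comparison in the proof of Theorem \ref{thm1}(ii) carries over. We have $\ell_{j_0}>0$ on $(0,\infty)$.

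Let $u$ be a nontrivial global solution. Since $u(t_0)\neq0$ for some $t_0$, and $S_{\mathbb{G}}(t)$ has a positive kernel, I may shift time so that $u_0\neq0$ is positive on a ball. Fix $K=B_{\mathbb{G}}(0,1)$. By \eqref{lower-est-Heisenberg},
$$\frac1{\mu(K)}\int_K S_{\mathbb{G}}(T)u_0\,d\mu\geq CT^{-Q/2}$$
for $T$ large. Since $\psi(\tau)=\int_\tau^\infty d\sigma/f_{j_0}(\sigma)$ is decreasing, the left side of \eqref{TTa} is at most $\psi(CT^{-Q/2})$.

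For the power case, $\psi(s)=s^{1-q}/(q-1)$, so the left side is at most $CT^{Q(q-1)/2}$. The right side equals $T^{\alpha+1}/(\alpha+1)$. Since $Q(q-1)/2<\alpha+1$, \eqref{TTa} holds for $T$ large.

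For the logarithmic case, $\psi(s)=[\ln(1+s)]^{1-q}/(q-1)\sim s^{1-q}/(q-1)$ as $s\to0$, which gives the same growth. Theorem \ref{thm1}(ii) then contradicts globality.

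\textbf{Main obstacle.} The delicate points are the structural hypotheses in the logarithmic case: convexity of $(1+u)[\ln(1+u)]^{q}$ near $0$ for $(H_2)$, and monotonicity of $F_i$. The time-shift argument also needs care when $\alpha_{j_0}\neq0$, because $\ell$ is not translation invariant. Using $\ell(t+t_0)\geq t^{\alpha}$ when $\alpha\geq0$, or a comparable bound for large $t$, should suffice. Alternatively, one can simply treat initial data positive on a ball, which is the setting of \eqref{lower-est-Heisenberg}.
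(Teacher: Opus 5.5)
Your proposal is correct and follows essentially the paper's route: part (i) uses Theorem~\ref{thm1}(ii) with the lower bound \eqref{lower-est-Heisenberg} and the comparison $T^{Q(q_{j_0}-1)/2}\ll T^{\alpha_{j_0}+1}$, and part (ii) uses Theorem~\ref{thm1}(i) with the decay \eqref{upper-est-heisenb} split into small and large times. Your rescaling $w_0\mapsto\varepsilon w_0$ is in fact a cleaner way than the paper's ``take $\|w_0\|_\infty$ small'' to handle the $w_0$-dependence of the constant in \eqref{upper-est-heisenb}. The obstacles you flag are not real: $f(u)=(1+u)[\ln(1+u)]^{q}$ is convex on $[0,\infty)$ since $f''(u)=q(1+u)^{-1}[\ln(1+u)]^{q-2}\bigl(\ln(1+u)+q-1\bigr)>0$ for $u>0$; $F_i$ is nondecreasing by its definition as a supremum; and no time shift is needed, because any nonzero $u_0\in L^p_+$ has positive integral over some ball, which is all that \eqref{lower-est-Heisenberg} requires.
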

\begin{proof} {\bf Case} $f(u_i)=u_{i}^{q_i}$. (i) Let $u_0 \in L_+^p(X, \mu)$, where $u_0 \neq 0$, and let $K$ be a compact set in the topology of $\mathbb{G}$ such that $u_0(\xi) >0$ for a.e $\xi \in K$. By choosing $t \geq 1$ large enough so that the estimate \eqref{lower-est-Heisenberg} holds and 
$$ t^{Q(q_i-1)/2 -(1+\alpha_i)} \leq \dfrac{q_i-1}{(\alpha_i+1) C^{1-q_i}}, $$ we obtain
  \begin{align*}
      \begin{aligned}
          \int_{ \frac{1}{ \mu(K) } \int_K S_{\mathbb{G}} ( t ) u _0 d \mu(\xi)}^{\infty} \frac{d \sigma}{\sigma^{q_i}} &=   \dfrac{ \left[ \frac{1}{ \mu(K) } \int_K S_{\mathbb{G}} ( t ) u _0 d\mu(\xi) \right]^{1-q_i}}{q_i-1} \\
          &\leq \dfrac{C^{1-q_i}}{q_i-1} t^{\frac{Q(q_i-1)}{2}}  ~ \leq ~ \int_{0}^{t} \sigma^{\alpha_i} ~ d \sigma.
      \end{aligned}
  \end{align*}
 Therefore, Theorem \ref{thm1} (ii) implies that the solution of problem (\ref{eq2.1}) with initial condition $u_0$ is not global. 

(ii) Let $w_0  \in C_c(\mathbb{G}) \subset L_+^p(X, \mu)$, $w_0 \neq 0$, which will be conveniently chosen later. The estimate \eqref{upper-est-heisenb} gives us
\begin{align*}
\sum_{i=1}^{k}    \int_{0}^{\infty}  \ell_i(\sigma)  F_i(\|S_{\mathbb{G}}(\sigma)w_{0}  \|_{\infty})  d\sigma  & =\sum_{i=1}^{k}    \int_{0}^{\infty}  \sigma^{\alpha_i} \|  S_{\mathbb{G}}(\sigma)w_{0}  \|_{\infty}^{q_i-1}  d\sigma  \\
&=  \sum_{i=1}^{k} \left (\int_0^{m} + \int_{m}^{\infty} \right )    \sigma^{\alpha_i} \|  S_{\mathbb{G}}(\sigma)w_{0}  \|_{\infty}^{q_i-1}  d\sigma\\
    \noalign{\medskip}
    & \leq   \sum_{i=1}^{k}  \int_0^{m} \|w_0 \|_{\infty}^{q_i - 1}  {\sigma}^{\alpha_i} d\sigma \\
    & \, \, \,  +  \sum_{i=1}^{k}  \int_m^{\infty} {\sigma}^{-{Q(q_i-1)}/{2} + \alpha_i} d\sigma 
\end{align*}
The convergence of the last integral is ensured because $q_i > 1+ {(2+2\alpha_i )}/{Q}$. Therefore, the condition (\ref{GLOB1}) of Theorem \ref{thm1} (i) is verified by choosing $m$ sufficiently large and $\|w_0 \|_{\infty}$ sufficiently small.\\

\noindent
{\bf Case }$f_i (u) = (1+u)[\ln(1+u)]^{q_i}$. (i) Let $t >1,$ which will be choose large enough later and $ u_0  \in L_+^p(X, \mu)$, $u_0 \neq 0$. Utilizing the same arguments as the previous case and since $1 < q_i < 1 + (2+2 \alpha_i)/Q$, we have
 \begin{align*}
      \begin{aligned}
        & \left[ \int_{ \frac{1}{\mu(K)} \int_K S_{\mathbb{G}} ( t ) u _0 d \mu}^{\infty} \frac{d \sigma}{ (1+\sigma)[\ln(1+\sigma)]^{q_i}      } \right]^{-1}  \int_{0}^{t} \sigma^{\alpha_i} ~ d \sigma\\
         &\geq \frac{q_i -1}{\alpha_i +1}   \left[ \ln \left( 1+ 
 \frac{1}{\mu(K)} \int_K S_{\mathbb{G}} ( t ) u _0 d \mu \right) \right]^{q_i -1} t^{\alpha_i+1}\\
          &\geq \frac{q_i -1}{\alpha_i +1} \left[ \ln \left( 1+ C t^{-Q/2} \right) \right]^{q_i - 1} t^{\alpha_i+1} \\
          & =  \frac{q_i -1}{\alpha_i +1} \left[ \ln \left( 1+ C t^{-Q/2} \right)  t^{\frac{  \alpha_i+1}{q_i - 1}} \right]^{q_i - 1}\\
          & \geq \frac{q_i -1}{\alpha_i +1} \left[ \ln \left( 1+ C t^{-Q/2} \right) t^{\frac{  \alpha_i+1}{q_i - 1}}   \right]^{q_i - 1} \\
          & \geq \frac{q_i -1}{\alpha_i +1} \left[ C t^{ - \frac{Q}{2}} t^{\frac{  \alpha_i+1}{q_i - 1}}   \right]^{q_i - 1} >1
      \end{aligned}
  \end{align*}
for $t>1$ large enough. This is possible because $\ln (1+x)>x/(1+x)$ for $x>-1$.


(ii) Let $w_0  \in C_c(\mathbb{G}) \subset L^p_+(X, \mu)$, $w_0 \neq 0$, which will be conveniently chosen later. From \eqref{upper-est-heisenb} and using the following known upper bound estimate $\ln(x+1) \leq x \ (x>-1)$, we have
\begin{align*}
&\sum_{i=1}^{k}    \int_{0}^{\infty}  \ell_i(\sigma)  F_i(\|S_{\mathbb{G}}(\sigma)w_{0}  \|_{\infty})  d\sigma \\
  &= \displaystyle \sum_{i=1}^k \int_{0}^{\infty} \sigma^{\alpha_i} \dfrac{ \left( 1+ \|S_{\mathbb{G}}(\sigma)w_0\|_{\infty} \right) \left[ \ln \left( 1+ \|S_{\mathbb{G}}(\sigma)w_0\|_{\infty} \right) \right]^{q_i}}{\|S_{\mathbb{G}}(\sigma)w_0\|_{\infty}} d\sigma \\
  &\leq  \left( 1+ \|w_0\|_{\infty} \right)\displaystyle \sum_{i=1}^k \int_{0}^{\infty} \sigma^{\alpha_i}   \|S_{\mathbb{G}}(\sigma)w_0\|_{\infty}^{q_i-1} d\sigma \\
  & \leq \left( 1+ \|w_0\|_{\infty} \right)\displaystyle \sum_{i=1}^k  \left (\int_0^{m} + \int_{m}^{\infty} \right )    \sigma^{\alpha_i} \|  S_{\mathbb{G}}(\sigma)w_{0}  \|_{\infty}^{q_i-1}  d\sigma\\
    \noalign{\medskip}
    & \leq\left( 1+ \|w_0\|_{\infty} \right) \left[ \sum_{i=1}^{k}  \int_0^{m} \|w_0 \|_{\infty}^{q_i - 1}  {\sigma}^{\alpha_i} d\sigma + \displaystyle \sum_{i=1}^k \int_m^{\infty} {\sigma}^{-{Q(q_i-1)}/{2} + \alpha_i} d\sigma \right].
\end{align*}
The convergence of the last integrals is guaranteed because $-{Q(q_i-1)}/{2} + \alpha_i<-1$ for all $i=1, \cdots, k$. Therefore, the result follows from Theorem \ref{thm1} (i) since condition (\ref{GLOB1}) is satisfied by choosing $m$ sufficiently large and  $\|w_0 \|_{\infty}$ sufficiently small.
\end{proof}

We will now present a result that involves exponential nonlinearities
$$f_i(u) =  e^{ \beta_i u} -1, \qquad \beta_i>0, \mbox{ for } i=1,\cdots, k.$$
These nonlinearities are associated with the Frank-Kamenetsky equation $u_t - \Delta u = e^{u},$ which is found in models of combustion theory and has been extensively studied in various works (see \cite{FUJISHIMA}, \cite{IOKU}, \cite{TELLO}, and the references therein).
\begin{cor}\label{coEXP} Let $X = \mathbb{G}$ and $u_0 \in L^p_+(X, \mu)$, $u_0 \neq 0$. If   $f_i (u) = e^{ \beta_i u} -1 \, \, (\beta_i>0)$, and $\ell_i(t) = t^{\alpha_i} \, \, (\alpha_i>-1)$, for $i = 1, \cdots, k.$ Then the problem \eqref{eq2.1} has no nontrivial global solutions.
\end{cor}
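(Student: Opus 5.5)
We apply Theorem \ref{thm1} (ii) with $j_0 = 1$, or any fixed index $i$. First we check $(H_2)$ for $f_i(u) = e^{\beta_i u}-1$. The function $f_i$ is convex, vanishes at $0$, and is strictly positive on $(0,\infty)$. Moreover, for $\tau>0$,
$$\int_\tau^\infty \frac{d\sigma}{e^{\beta_i\sigma}-1} = -\frac{1}{\beta_i}\ln\left(1-e^{-\beta_i \tau}\right) < \infty .$$
This follows from the substitution $\int \frac{e^{-\beta_i\sigma}}{1-e^{-\beta_i\sigma}}d\sigma$. Also $\ell_i(t) = t^{\alpha_i} > 0$ for $t>0$, so the hypotheses on the source are satisfied.

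Next, given $u_0 \in L^p_+$ with $u_0\neq 0$, we pick a compact set $K$ of positive finite measure on which $u_0>0$, as in the proof of Corollary \ref{coEXTRA}. By \eqref{lower-est-Heisenberg}, for $t$ large,
$$m(t) := \frac{1}{\mu(K)}\int_K S_{\mathbb{G}}(t)u_0\,d\mu \geq C t^{-Q/2}.$$
The left-hand side of \eqref{TTa} is therefore at most
$$-\frac{1}{\beta_i}\ln\left(1-e^{-\beta_i C t^{-Q/2}}\right),$$
because $\psi$ is decreasing. As $t\to\infty$ we have $x := \beta_i C t^{-Q/2} \to 0$. Using $1-e^{-x} \geq x/2$ for small $x$, this bound is at most
$$\frac{1}{\beta_i}\ln\frac{2}{\beta_i C} + \frac{Q}{2\beta_i}\ln t .$$
So it grows only logarithmically in $t$. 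The right-hand side of \eqref{TTa} is $t^{\alpha_i+1}/(\alpha_i+1)$, which grows polynomially since $\alpha_i>-1$. Hence \eqref{TTa} holds for some large $T$, and Theorem \ref{thm1} (ii) shows that the solution is non-global. Since $u_0$ was an arbitrary nontrivial initial datum, there are no nontrivial global solutions.

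The main point needing care is the elementary asymptotic comparison of $\ln(1/x)$ with the polynomial growth of $t^{\alpha_i+1}$. This uses the inequality $1-e^{-x}\ge x/2$ for $0<x\le 1$, which follows from the concavity of $1-e^{-x}$. Beyond that, we only need to ensure that $t$ is large enough for \eqref{lower-est-Heisenberg} to apply. The argument works for any single index $i$, so every choice of $\beta_i>0$ and $\alpha_i>-1$ gives blow-up, with no Fujita threshold.
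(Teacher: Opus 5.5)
Your proposal is correct and follows the paper's own proof: you verify $(H_2)$ for $e^{\beta_i u}-1$, compute $\int_\tau^\infty d\sigma/(e^{\beta_i\sigma}-1)=-\beta_i^{-1}\ln(1-e^{-\beta_i\tau})$, insert the lower bound \eqref{lower-est-Heisenberg} using the monotonicity of $\psi$, and compare with $t^{\alpha_i+1}/(\alpha_i+1)$ for large $t$ via Theorem \ref{thm1} (ii). The only difference is that the paper merely asserts the ``for $t$ large enough'' comparison, whereas you justify it with $1-e^{-x}\ge x/2$, which shows the left side of \eqref{TTa} grows only like $\frac{Q}{2\beta_i}\ln t$.
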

\begin{proof}
    Note that the functions $f_i (u)$ are convex for $i = 1,\cdots, k$ and $f_i(0)=0$. Now, we will make a similar argument to the previous corollaries. We have
 \begin{align*}
      \begin{aligned}
          \int_{ \frac{1}{\mu(K)}\int_K S_{\mathbb{G}} ( t ) u _0 d \mu}^{\infty} \frac{d \sigma}{ e^{ \beta_i \sigma } -1      }  &=   \dfrac{ 1}{\beta_i}  \ln \left( \dfrac{1}{ 1 - \exp\left( -  \frac{\beta_i}{\mu(K)} \int_K S_{\mathbb{G}} ( t ) u _0 d \mu \right) } \right) \\
          &\leq \dfrac{ 1}{\beta_i}     \ln \left( \dfrac{1}{ 1 - \exp\left( - c_1 \beta_i  t^{-Q/2} \right) } \right)   \\
      \end{aligned}
  \end{align*}
On the other hand, for $t \geq 1$ large enough, we have $$ t^{-(\alpha_i+1)}  \ln \left( \dfrac{1}{ 1 - \exp\left( - c_1 \beta_i  t^{-Q/2} \right) } \right) \leq  \dfrac{\beta_i}{\alpha_i+1}.$$ Hence, we can conclude
$$ \int_{ \frac{1}{\mu(K)} \int_K S_{\mathbb{G}} ( t ) u _0 d \mu}^{\infty} \frac{d \sigma}{ e^{ \beta_i \sigma } -1      } \leq ~ \int_{0}^{t} \sigma^{\alpha_i} ~ d \sigma$$
for $t$ sufficiently large. Thus, the result follows from Theorem \ref{thm1} (ii).
\end{proof}

\subsection*{The hyperbolic space $\bold{H}^N$}

Several models describe the hyperbolic space $\bold{H}^N$. One way is to visualize $\bold{H}^N$ as an embedded hyperboloid in $\mathbb{R}^{N+1}$. Another approach is to use one of the two Poincaré realizations, where we can identify $\bold{H}^N$ with the unit ball in $\mathbb{R}^{N}$ or with the upper half-space, each having an appropriate metric. For a more detailed explanation of hyperbolic space, see \cite{BENEDETTI} and \cite{RATCLIFFE}. In our context, the hyperbolic space $\bold{H}^N$ refers to the unit ball $B_1$ of $\mathbb{R}^{N}$ centered at the origin, equipped with the Riemannian measure, and the Poincaré metric
    $$ ds = \dfrac{2}{1- \lvert x \rvert^2} \lvert dx \rvert .$$
 The Poincaré metric on the unit ball leads to distances expanding as one approaches the edge of the ball, reflecting the infinite nature of hyperbolic space.  The Laplace–Beltrami operator on $\bold{H}^N$ in geodesic coordinates it reads as
  $$ \Delta_{\bold{H}^N} = \dfrac{1}{ (\sinh \rho)^{N-1} } \dfrac{\partial}{\partial \rho } \left(   (\sinh \rho)^{N-1} \dfrac{\partial u }{\partial \rho}
 \right)     + \dfrac{1}{(\sinh \rho)^2 } \Delta_{\theta} u,$$ 
where $\Delta_{\theta}$ denotes the Laplace–Beltrami operator on $\mathbb{S}^{N-1}$ and $ d\mu = (\sinh \rho)^{N-1} d \rho d \theta$, with  $\theta \in \mathbb{S}^{N-1}$, $\rho \in [0,\infty)$ denoting the geodesic polar coordinates.

From Davies (see \cite[Section 5.7]{DAVIES}) the heat kernel $K_N(x, y, t)$ in $\bold{H}^N$ satisfies $\int_{\mathrm{H}^N} K_N(x, y, t) \, d\mu(y) = 1$.
This implies that the associated convolution semigroup $\{S_{\bold{H}^N}(t)\}_{t\geq 0}$ is sub-Markovian. Furthermore, $K_N(x, y, t)$ satisfies the two-sided estimates of the heat kernel:
$$
c_N^{-1} h_N(d(x, y), t) \leq K_N(x, y, t) \leq c_N h_N(d(x, y), t),
$$
 where $c_N$ is a positive constant, $d(x, y)$ is the hyperbolic distance between two points $x$ and $y$ in $\bold{H}^N$, and
$$
\begin{aligned}
h_N(d(x, y), t) &= (4 \pi t)^{- \frac{N}{2}} (1 + d(x, y))(1 + d(x, y) + t)^{\frac{N-3}{2}} \\
&\quad \times e^{-\lambda_0 t - \frac{N-1}{2}d(x, y) - \frac{d(x, y)^2}{4t}}
\end{aligned}
$$
with $\lambda_0 = (N-1)^2/4$. In particular, it follows from \cite[Corollary of Theorem 3.1]{MANDOUVALOS} or \cite[Proposition 2.3]{VÁZQUEZ}, that for $u_0 \in L^{\infty}( \bold{H}^N ) \cap L ^{1}( \bold{H}^N )$, there exists $C>0$ such that
\begin{equation}\label{estimativa-Vasquez}
\| S_{\bold{H}^N}(t) u_0 \|_{\infty} \leq Ct^{-3/2} e^{- \lambda_0 t} \mbox{ for } t \geq 1.    
\end{equation}

On the other hand, if $u_0 \geq 0$ and $u_0 \neq 0$ in a ball of $\bold{H}^N$, then for any $t > 0$ there exists a function $f \in C( \bold{H}^N )$, $f>0$ such that 
\begin{equation}\label{Bandle}
[S_{\bold{H}^N}(t)u_0](x) \geq f(x) t^{-3/2} e^{-\lambda_0 t }
\end{equation}
for any $t \in [\varepsilon, \infty)$ and $x\in \bold{H}^N$(see \cite[Lemma 5.1]{BANDLE}).


The following corollary shows that in $\bold{H}^N$, the problem \eqref{eq2.1} with $f_i (u) = u^{q_i}$ or $f_i (u) = (1+ u)[\ln(1+u)]^{q_i}$, where $q_i>1$ and $\ell_i(t) = e^{\alpha_i t}$ ($\alpha_i>0$), for $i = 1,\cdots, k$, has a critical exponent given by
$$q^* = \max_{1 \leq i \leq k} \left\{ 1 + \frac{4 \alpha_i}{(N-1)^2} \right\}. $$ 
This generalizes the results of C. Bandle et al. in \cite{BANDLE} and Hui Wu in \cite{HUI}.

\begin{cor}\label{co2} Assume that $X = \bold{H}^N$ is the $N$-dimensional hyperbolic space, $f_i (u) = u^{q_i}$ or $f_i (u) = (1+u)[\ln(1+u)]^{q_i}$, $q_i>1$ and $\ell_i(t) = e^{\alpha_i t} \, \,(\alpha_i>0)$, for $i = 1,\cdots, k.$ 
\begin{itemize}
  \item[(i)] If \, $1 < q_i < 1 + {4 \alpha_i}/{(N-1)^2} $ for some $i = 1, \cdots, k$, then the problem $\eqref{eq2.1} $ has no nontrivial global solutions.
  \item[(ii)] If $q_i > 1 + {4 \alpha_i}/{(N-1)^2}$ for all $i = 1,\cdots, k$, then problem \eqref{eq2.1} has a nontrivial global solution.
\end{itemize}
\end{cor}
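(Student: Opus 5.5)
We follow the proof of Corollary \ref{coEXTRA}, with the polynomial heat-kernel bounds \eqref{lower-est-Heisenberg}--\eqref{upper-est-heisenb} replaced by the hyperbolic bounds \eqref{Bandle} and \eqref{estimativa-Vasquez}. Write $\lambda_0=(N-1)^2/4$. Then the threshold $q_i<1+4\alpha_i/(N-1)^2$ is exactly $\lambda_0(q_i-1)<\alpha_i$.

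\textbf{Part (i).} Fix the index $i$ with $\lambda_0(q_i-1)<\alpha_i$ and apply Theorem \ref{thm1} (ii) with $j_0=i$.
\begin{itemize}
\item The hypotheses hold: $\ell_i(t)=e^{\alpha_i t}>0$, and both choices of $f_i$ satisfy $(H_2)$. Indeed, $u^{q_i}$ is convex, and $(1+u)[\ln(1+u)]^{q_i}$ is convex with $\int_\tau^\infty d\sigma/f_i<\infty$.
\item Take $u_0\neq 0$. If needed, replace it by its restriction to a ball where it is nontrivial; this is harmless by comparison, or simply use \eqref{Bandle} directly. Take $K$ a compact ball. Since $f$ in \eqref{Bandle} is continuous and positive, $m_K:=\min_K f>0$, so
$$A(t):=\frac{1}{\mu(K)}\int_K S_{\bold{H}^N}(t)u_0\,d\mu\geq m_K t^{-3/2}e^{-\lambda_0 t}$$
for all $t\geq\varepsilon$.
\item Polynomial case: the left side of \eqref{TTa} is $A(t)^{1-q_i}/(q_i-1)\le C t^{3(q_i-1)/2}e^{\lambda_0(q_i-1)t}$. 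The right side is $(e^{\alpha_i t}-1)/\alpha_i$. Since $\lambda_0(q_i-1)<\alpha_i$, the exponential gap beats the polynomial factor, so \eqref{TTa} holds for $t$ large.
\item Logarithmic case: the left side equals $[\ln(1+A(t))]^{1-q_i}/(q_i-1)$. Use $\ln(1+x)\ge x/(1+x)\ge x/2$ for small $x$; this applies since $A(t)\to0$, or else the integral is already bounded. This gives the same bound $C t^{3(q_i-1)/2}e^{\lambda_0(q_i-1)t}$, and the same comparison closes the argument.
\end{itemize}
The one point to check is that Theorem \ref{thm1} (ii) needs $u_0\in L^p_+$ only, while \eqref{Bandle} needs nontriviality on a ball. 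Every nonzero $u_0$ is positive on a set of positive measure, and we need the estimate to hold for such $u_0$. The safest route is to apply \eqref{Bandle} to $\min(u_0,1)\chi_B$ and use positivity of the semigroup.

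\textbf{Part (ii).} Take $w_0\in C_c(\bold{H}^N)$, $w_0\ge0$, nontrivial, with $\|w_0\|_\infty\le\eta$ small, and verify \eqref{GLOB1}.
\begin{itemize}
\item Polynomial case: $F_i(s)=s^{q_i-1}$ and $(H_1)$ holds. Split $\int_0^\infty=\int_0^m+\int_m^\infty$. On $[0,m]$ use $\|S(\sigma)w_0\|_\infty\le\|w_0\|_\infty$, which gives the bound $\eta^{q_i-1}\int_0^m e^{\alpha_i\sigma}d\sigma$. On $[m,\infty)$ use \eqref{estimativa-Vasquez}: the integrand is at most $C^{q_i-1}\sigma^{-3(q_i-1)/2}e^{(\alpha_i-\lambda_0(q_i-1))\sigma}$. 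This is integrable because $\lambda_0(q_i-1)>\alpha_i$.
\item Choosing the constants: the constant $C$ in \eqref{estimativa-Vasquez} depends on $w_0$. Fix a profile $\phi$ and set $w_0=\eta\phi$. By linearity $C=\eta C_\phi$, so the tail term is $O(\eta^{q_i-1})$. Fix $m=1$; then the whole sum is $O(\eta^{q_i-1})$, hence below $\beta/(\beta+1)$ for small $\eta$.
\item Logarithmic case: $f_i$ is nondecreasing and $f_i(s)/s\to0$ as $s\to0$, so $(H_1)$ holds. For $s\le\eta$, $F_i(s)\le(1+\eta)s^{q_i-1}$ by $\ln(1+x)\le x$. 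This reduces to the polynomial estimate.
\end{itemize}
Then Theorem \ref{thm1} (i) gives the global solution.

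The main subtlety is this scaling bookkeeping, so that the tail constant also shrinks with $\eta$.
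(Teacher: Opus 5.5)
Your proposal is correct and follows essentially the same route as the paper. For (i) it combines the lower bound \eqref{Bandle} with Theorem \ref{thm1} (ii) under $\lambda_0(q_i-1)<\alpha_i$; for (ii) it splits $\int_0^\infty$, using $\|S_{\bold{H}^N}(\sigma)w_0\|_\infty\le\|w_0\|_\infty$ on the initial interval and \eqref{estimativa-Vasquez} on the tail, then applies Theorem \ref{thm1} (i). Your extra bookkeeping fills in details the paper leaves implicit: writing $w_0=\eta\phi$ so the tail constant scales with $\eta$, the reduction to $\min(u_0,1)\chi_B$ via positivity of the semigroup, and the explicit treatment of the logarithmic case.
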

\begin{proof} We only treat the polynomial case, because the logarithmic case can be handled similarly arguing as the proof of Corollary \ref{coEXTRA}.

(i) Let $u_0 \in L^p_{+}(X,\mu)$, $u_0 \neq 0$ and $K \subset X$ be a compact subset such that the restricted function $u_0\mid_{K}$ is nonzero. From estimate \eqref{Bandle} we have
  \begin{align*}
      \begin{aligned}
         \int_{ \frac{1}{\mu(K)}\int_K S_{\bold{H}^N}(t) u _0 d \mu}^{\infty} \frac{d \sigma}{\sigma^{q_i}}  &=   \dfrac{   \left[ \frac{1}{\mu(K)} \int_K S_{\bold{H}^N}(t) u _0 d \mu \right]^{1-q_i}}{q_i-1}\\
         &\leq \dfrac{\left[ \frac{1}{\mu(K)} \int_K f d \mu \right]^{1-q_i}}{q_i-1} t^{3(q_i-1)/2} e^{\lambda_0 (q_i-1) t } \\ 
      \end{aligned}
  \end{align*}
Note that the value $[{\mu(K)}]^{-1} \int_K f d \mu $ is finite, because $f \in C(\bold{H}^N)$. Hence, recalling that $\lambda_0 = (N-1)^2/4$ and $\lambda_0 (q_i-1)<\alpha_i$, we have
$$ \dfrac{t^{3(q_i-1)/2} e^{\lambda_0 (q_i-1) t } }{ e^{\alpha_i t} -1 } \leq \dfrac{q_i-1} {\alpha_i \left[ \frac{1}{\mu(K)} \int_K f d \mu \right]^{1-q_i} }, $$
for $t > \varepsilon>0$ sufficiently large. Then
 \begin{align*}
      \begin{aligned}
         \int_{ \frac{1}{\mu(K)}\int_K S_{\bold{H}^N}(t) u _0 d \mu}^{\infty} \frac{d \sigma}{\sigma^{q_i}}  & \leq \dfrac{e^{\alpha_i t} -1}{\alpha_i}\\
        & =   \int_{0}^{t} \ell_i(\sigma) d \sigma.
      \end{aligned}
  \end{align*}
Therefore, the result follows from Theorem \ref{thm1} (ii).\\

(ii) Let $w_0 \in C_c(X) \subset L^p_{+}(X,\mu)$, which will be chosen to be sufficiently small. To prove the existence of a global solution for problem \eqref{eq2.1}, we only need to show that
\begin{equation}\label{glob-hiper1}
 \sum_{i=1}^{k}    \int_{0}^{\infty}  \ell_i(\sigma)  F_i(\|S_{\mathbb{G}}(\sigma)w_{0}  \|_{\infty})  d\sigma =   \displaystyle \sum_{i=1}^k \int_{0}^{\infty} e^{ \alpha_i \sigma} \|S_{\bold{H}^N}(\sigma)w_0\|_{\infty}^{q_i-1} d\sigma   < 1,
\end{equation}
In fact, using  \eqref{estimativa-Vasquez}, we have
\begin{align*}
   & \displaystyle \sum_{i=1}^k  \displaystyle \int_{0}^{\infty} e^{ \alpha_i \sigma}  \|S_{\bold{H}^N}(\sigma)w_0\|_{\infty}^{q_i-1} d\sigma \\
   & = \displaystyle \sum_{i=1}^k  \displaystyle \int_{0}^{t_0} e^{ \alpha_i \sigma}  \|S_{\bold{H}^N}(\sigma)w_0\|_{\infty}^{q_i-1} d\sigma + \displaystyle \sum_{i=1}^k 
  \displaystyle \int_{t_0}^{\infty} e^{ \alpha_i \sigma}  \|S_{\bold{H}^N}(\sigma)w_0 \|_{\infty}^{q_i-1} d\sigma\\
   & \leq \displaystyle \sum_{i=1}^k  \|w_0\|_{\infty}^{q_i-1} \int_{0}^{t_0} e^{ \alpha_i \sigma} d\sigma   + C \displaystyle \sum_{i=1}^k \displaystyle \int_{t_0}^{\infty}  \sigma^{-3(q_i-1)/2} e^{ -[  \lambda_0 (q_i-1) - \alpha_i] \sigma} d\sigma .
\end{align*}
for $t_0 \geq 1$. Since $\lambda_0 (q-1) - \alpha_i>0$ for $i=1,...,k$ and based on the properties of the incomplete gamma function, we can conclude that the above integrals are bounded. Therefore, the estimate of \eqref{glob-hiper1} holds true if $t_0$ is sufficiently large and $\|w_0\|_{\infty}$ is sufficiently small.  
\end{proof}

The exponential case of the nonlinearity is given in the following result.

\begin{cor}\label{coEXP-hiperbolico}  Let $X = \bold{H}^N$ be the $N$-dimensional hyperbolic space and $u_0 \in L^p_+(X, \mu)$, $u_0 \neq 0$. If   $f_i (u) = e^{ \beta_i u} -1 \, \, (\beta_i>0)$, and $\ell_i(t) = e^{\alpha_i t} \, \,(\alpha_i>0)$ or $\ell_i(t) = t^{\alpha_i} \, (\alpha_i>-1)$, for $i = 1, \cdots, k.$, then the problem \eqref{eq2.1} has no nontrivial global solutions.
\end{cor}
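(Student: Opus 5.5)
We apply Theorem \ref{thm1} (ii) with $j_0=i$ for any fixed index $i$, following the scheme of Corollary \ref{coEXP}. First, $f_i(u)=e^{\beta_i u}-1$ meets $(H_2)$. It is convex, vanishes at $0$, and is strictly positive on $(0,\infty)$. Moreover, for $\tau>0$,
\[
\int_\tau^\infty \frac{d\sigma}{e^{\beta_i\sigma}-1}=\frac{1}{\beta_i}\ln\Bigl(\frac{1}{1-e^{-\beta_i\tau}}\Bigr)<\infty .
\]
Both choices of $\ell_i$ are positive for $t>0$, so the task reduces to finding a compact $K$ with $0<\mu(K)<\infty$ and a time $T>0$ for which (\ref{TTa}) holds.

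Take $u_0\in L^p_+$ with $u_0\neq 0$. Then $u_0$ is nonzero on some ball, so the lower bound (\ref{Bandle}) applies. Pick a compact set $K$ (a closed geodesic ball, say) and put $a:=\frac{1}{\mu(K)}\int_K f\,d\mu>0$, which is finite and positive because $f$ is continuous and positive. Then (\ref{Bandle}) gives
\[
m(t):=\frac{1}{\mu(K)}\int_K S_{\bold{H}^N}(t)u_0\,d\mu\ \ge\ a\,t^{-3/2}e^{-\lambda_0 t},\qquad t\ge\varepsilon .
\]
The function $\tau\mapsto\frac1{\beta_i}\ln\frac{1}{1-e^{-\beta_i\tau}}$ is decreasing, so the left-hand side of (\ref{TTa}) is at most $\frac{1}{\beta_i}\ln\frac{1}{1-e^{-\beta_i a t^{-3/2}e^{-\lambda_0 t}}}$. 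Set $x=\beta_i a t^{-3/2}e^{-\lambda_0 t}$, which tends to $0$ as $t\to\infty$. Using $1-e^{-x}\ge x/2$ for small $x$, this quantity is bounded by
\[
\frac{1}{\beta_i}\ln\frac{2}{x}=\frac{1}{\beta_i}\Bigl(\ln\frac{2}{\beta_i a}+\tfrac32\ln t+\lambda_0 t\Bigr)=O(t).
\]

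The remaining step is to compare this with $\int_0^t\ell_i$. When $\ell_i(t)=e^{\alpha_i t}$ with $\alpha_i>0$, we have $\int_0^t\ell_i=(e^{\alpha_i t}-1)/\alpha_i$. This grows exponentially, so it eventually exceeds the $O(t)$ bound, and (\ref{TTa}) holds for $T$ large. When $\ell_i(t)=t^{\alpha_i}$ with $\alpha_i>-1$, we have $\int_0^t\ell_i=t^{\alpha_i+1}/(\alpha_i+1)$. This is the main obstacle. For $\alpha_i>0$ the integral is superlinear and again beats $O(t)$. For $-1<\alpha_i\le 0$, however, the left side grows like $\lambda_0 t/\beta_i$ while the right side grows like $t^{\alpha_i+1}/(\alpha_i+1)$, which is at most linear. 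The inequality then cannot be read off directly, because the exponential decay of the heat semigroup on $\bold{H}^N$ makes the averaged data shrink too fast.

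To handle $-1<\alpha_i\le 0$, I would not take $T\to\infty$. Instead I would look for a single suitable $T$, using that the right side of (\ref{TTa}) is arbitrary only up to that choice. One option is to first shift time: take a later time of the solution as new data, which is allowed because the solution, if global, restricted to $[t_1,\infty)$ is again global with the corresponding shifted weights. Another is to compare the constants $\lambda_0/\beta_i$ and $1/(\alpha_i+1)$ at the borderline $\alpha_i=0$. If neither works, the honest conclusion is that in this subcase the argument proves the statement only under an extra condition, such as $\beta_i$ large or $\alpha_i>0$. The cases I expect to go through cleanly are $\ell_i=e^{\alpha_i t}$ and $\ell_i=t^{\alpha_i}$ with $\alpha_i>0$. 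In each case Theorem \ref{thm1} (ii) then gives non-globality for every nontrivial $u_0$.
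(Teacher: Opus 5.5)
For $\ell_i=e^{\alpha_i t}$ your argument is the paper's. You bound the left side of \eqref{TTa} by a quantity of order $\lambda_0 t/\beta_i$ via \eqref{Bandle}, and $(e^{\alpha_i t}-1)/\alpha_i$ overtakes it. Your treatment of $\ell_i=t^{\alpha_i}$ with $\alpha_i>0$ is also correct, and your comparison of constants settles $\alpha_i=0$ when $\beta_i>\lambda_0$.

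For the power weight the paper only says the case ``is showed similarly'', and your asymptotics show that this is exactly where it breaks. By \eqref{estimativa-Vasquez}, for data in $L^1\cap L^\infty$ the average $\frac{1}{\mu(K)}\int_K S_{\mathbf{H}^N}(t)u_0\,d\mu$ really decays like $t^{-3/2}e^{-\lambda_0 t}$. So the left side of \eqref{TTa} is genuinely $\frac{\lambda_0}{\beta_i}t+O(\ln t)$, and for $-1<\alpha_i<0$, or $\alpha_i=0$ with $\beta_i\le\lambda_0$, the comparison fails for large $T$. Shifting time does not help, since the shifted weight $(t+t_1)^{\alpha_i}$ still has a sublinear primitive. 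In fact nothing can help in the strict subcases $-1<\alpha_i<0$, and $\alpha_i=0$ with $\beta_i<\lambda_0$, because there the corollary is false.

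Here is the counterexample. Take $k=1$, $\ell(t)=t^{\alpha}$ with $-1<\alpha<0$, $f(u)=e^{\beta u}-1$, $0\le w_0\in C_c(\mathbf{H}^N)$ with $w_0\neq 0$, and write $S=S_{\mathbf{H}^N}$. Set $A(t)=\exp\bigl(2\beta t^{\alpha+1}/(\alpha+1)\bigr)$. Since $t^{\alpha+1}$ is sublinear, \eqref{estimativa-Vasquez} gives $D:=\sup_{t\ge 0}A(t)\|S(t)w_0\|_\infty<\infty$. Choose $\varepsilon>0$ with $e^{\beta\varepsilon D}\le 2$ and put $\bar u(t)=\varepsilon A(t)S(t)w_0$. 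Using $e^x-1\le xe^x$, positivity of $S$, $S(t-s)S(s)=S(t)$ and $A'=2\beta t^{\alpha}A$, we get
\[
\varepsilon S(t)w_0+\int_0^t S(t-s)\,s^{\alpha}\bigl(e^{\beta\bar u(s)}-1\bigr)\,ds\le \varepsilon\Bigl(1+\int_0^t 2\beta s^{\alpha}A(s)\,ds\Bigr)S(t)w_0=\bar u(t).
\]
So $\bar u$ is a global mild supersolution. The monotone iteration from the proof of Theorem~\ref{thm1}(i) then produces a global solution with initial datum $\varepsilon w_0\neq 0$. The same construction handles $\alpha=0$, $\beta<\lambda_0$, using $A(t)=e^{\gamma t}$ with $\beta<\gamma<\lambda_0$ and $e^{\beta\varepsilon D}\le\gamma/\beta$.

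What you have actually proved is the correct statement. Every nontrivial $u_0$ gives a non-global solution provided at least one index $i$ satisfies one of the following: $\ell_i=e^{\alpha_i t}$; or $\ell_i=t^{\alpha_i}$ with $\alpha_i>0$; or $\alpha_i=0$ with $\beta_i>\lambda_0$. The borderline $\alpha_i=0$, $\beta_i=\lambda_0$ is not decided by this method. You should present the result in this form rather than keep searching for a fix of the remaining subcase.
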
 
\begin{proof} For the case $\ell_i(t) = e^{\alpha_i t}$, according to the proofs of Corollaries \eqref{coEXP} and \eqref{co2} (i), it is enough to observe that 
 $$\dfrac{1 }{ e^{\alpha_i t} -1 }  \ln \left( \dfrac{1}{ 1 - \exp\left( - c_1 \beta_i  t^{-3/2} e^{-\lambda_0 t } \right) } \right) \leq  \dfrac{\beta_i} {\alpha_i },$$ 
 for $t \geq 1$ large enough, where $c_1 = [{\mu(K)}]^{-1} \int_K f d \mu $ is finite. Hence, we can conclude
$$ \int_{ \frac{1}{\mu(K)} \int_K S_{\bold{H}^N} ( t ) u _0 d \mu}^{\infty} \frac{d \sigma}{ e^{ \beta_i \sigma } -1      } \leq ~ \int_{0}^{t} e^{\alpha_i \sigma} ~ d \sigma$$
for $t$ sufficiently large. Thus, the result follows from Theorem \ref{thm1} (ii). 

The case $\ell_i(t) = t^{\alpha_i}$ is showed similarly.
\end{proof}

\subsection{Applications on graphs}

We will begin with a brief explanation of Laplacians, semigroups, and heat kernels on graphs. Further details can be found in \cite{KELLER1}, \cite{Keller}, \cite{HAESELER}, \cite{Horn}, and \cite{LIU}, as well as the references therein.

To present our findings, we will review certain definitions related to graphs.
Let $G = (V, E)$ be a graph, where \( V \) denotes the set of vertices, and \( E \) denotes the set of edges. A domain, denoted by \( \Omega \), is defined as a connected subset of the vertices \( V \). In this section, it is assumed that the graph \( G \) fulfills the subsequent conditions:
\begin{itemize}
    \item[(1)](Simple) The graph $G$ does not contain loops or multiple edges.
    \item[(2)] (Locally finite) For any vertex $x \in V$, there are only a finite number of vertices $y \in V$ such that $xy \in E$, where $xy$ denotes an edge of the graph with $x$ and $y$ as its endpoints.
    \item[(3)] (Connected) For any \( x, y \in V \), there are a finite number of edges connecting \( x \) and \( y \).
    \item[(4)] (Symmetric) For any $x, y \in V$, let $\omega: V \times V \rightarrow \mathbb{R}$ be a positive symmetric weight, that is, $\omega_{xy} > 0$ and $\omega_{xy} = \omega_{yx}$. Additionally, we assume that $\displaystyle \omega_0 = \inf_{xy \in E} \omega_{xy} > 0$.
    \item[(5)] (Positive finite measure) The function $\mu: V \rightarrow \mathbb{R}^+$ defines a positive finite measure on the graph $G$ and satisfies $\displaystyle \mu_0 = \inf_{x \in V} \mu(x) > 0$.
    \item[(6)] ($D_{\omega} < +\infty, D_\mu < +\infty$) Let $ \displaystyle \mu_1 = \sup_{x \in V} \mu(x)$ and $ \displaystyle m(x) =
\sum_{y \sim x} \omega_{xy}$,
where $y \sim x$ means $xy \in E$. $D_{\omega}$ and $D_{\mu}$ are defined by
$$D_\omega = \frac{\mu_1}{\omega_0} \, \,
\mbox{ and } \, \,
D_\mu = \sup_{x \in V} \frac{m(x)}{\mu(x)},$$
we assume $D_\omega < +\infty$, $D_{\mu} < +\infty$.
\end{itemize}
Let $C(V)$ denote the set of real functions on $V$, and let $C_c(V)$ represent the set of real functions on $V$ that have finite support.

The graph $G$ is said to satisfy the \textit{curvature-dimension condition} $CDE'(n,0)$, where $n > 0$, if for any $x \in V$ and any function $f: V \to \mathbb{R}^+$, the following inequality holds:
\[
\Gamma_2(f)(x) - \Gamma \left( f,  \dfrac{\Gamma (f)}{f} \right) \geq \frac{1}{n} ( f(x))^2 [(\Delta_{G} \log f)(x)]^2 ,
\]
where $\Delta_{G}$ is the $\mu$-Laplacian on $G$, defined by
\[
\Delta_{G} f(x) = \frac{1}{\mu(x)} \sum_{y \sim x} \omega_{xy} \big( f(y) - f(x) \big),
\]
where $\omega(x,y)=\omega_{xy}$ and $y\sim x$ means $xy\in E$; $xy$ is an edge of the graph and the vertices $x$, $y$ are called the endpoints of the edge. The \textit{carré du champ} operator $\Gamma(f)$ is defined by
\[
\Gamma(f)(x) = \frac{1}{2\mu(x)} \sum_{y \sim x} \omega_{xy}  \big( f(y) - f(x) \big)^2,
\]
and the iterated carré du champ operator $\Gamma_2(f)$ is defined by
\[
\Gamma_2(f)(x) = \frac{1}{2\mu(x)} \sum_{y \sim x} \omega_{xy}  \big( \Gamma(f)(y) - \Gamma(f)(x) \big).
\]

The distance $d(x, y)$ between $x$ and $y$ is defined by the number of edges belonging to the shortest path $\gamma$ connecting $x$ and $y$. Then we can define balls centered at $x\in V$ with radius $r>0$ by $B_r(x) = \{y \in V : d(x, y) \leq  r\}.$ The volume of the ball $B_r(x)$ is denoted as $V_{\mu}(x,r)$ and is defined by $V_\mu(x,r) = \sum_{y \in B_r(x)} \mu (y)$.


According to the preceding sections, the bounds for the heat kernel are essential. For this reason, we will utilize volume growth conditions, such as those undertaken by Lin and Wu (\cite{Lin1}, \cite{Wu1}) and Liu (\cite{LIU}). Indeed, Lin and Wu obtained Fujita-type results by assuming the following volume growth conditions: There exist $c > 0$ and $x_0 \in V$ with $u_0(x_0) > 0$ such that the inequality
\begin{equation} \label{GROWTH1}
 c^{-1}r^d \leq V_\mu(x_0, r ) \leq c \ r^d \, (d > 0)
\end{equation}
holds for all large enough $r \geq r_0 > 0$. This condition is known as uniform polynomial volume growth of degree $d$ and was recently generalized in \cite{LIU}. 

We assume the existence of a vertex \( x_0 \in V \) satisfying \( u_0(x_0) > 0 \), given that \( u_0(x) \) represents a nontrivial initial value. We then fix \( x_0 \) and define \( B_r = B(x_0, r) \) for any positive integer \( r \). Furthermore, we assume that \( G \) conforms to the general volume growth condition. Specifically, there exist constants \( c' > 0 \) and \( c'' > 0 \), and a vertex \( x_0 \in V \) with \( u_0(x_0) > 0 \), which guarantee that for all \( r \geq r_0 > 0 \), the following inequality holds true:
\begin{equation} \label{growth-grap}
c' r^d \ln^{-\xi }(1 + r) \leq V_\mu (x_0, r) \leq c'' r^d \ln^{\eta} (1 + r), 
\end{equation}
where \( d > 0 \), and \( \xi, \eta \geq 0 \). This condition was utilized in \cite{LIU}. Note that when $\xi=\eta=0$, condition (\ref{growth-grap}) is equivalent to condition (\ref{GROWTH1}).

The heat kernel $p_G(t,x,y)$ on $G$ is the smallest positive fundamental solution to the heat equation ${\partial u}/{\partial t} = \Delta_G \, u.$

We now review some well-known properties of the heat kernel $p_{G}(t, x, y)$ as follows (see \cite{Horn}, \cite{Wu1}).
\begin{proposition} \label{KernProp} For any $t, s > 0$, and $x, y$ belonging to $V$, we have
\begin{itemize}
    \item[(i)] $p_G(t, x, y) = p_G(t, y, x) >0$.
    \item[(ii)] $ \displaystyle \sum_{y \in V} \mu(y) p_G(t, x, y) \leq 1$.
    \item[(iii)] $ \displaystyle \sum_{z \in V} \mu(z) p_G(t, x, z) p_G(s, z, y) =  p_G(t+s, x, y) $.
\end{itemize}  
\end{proposition}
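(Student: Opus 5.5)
The natural route is to construct $p_G(t,x,y)$ as the monotone limit of Dirichlet heat kernels on an exhaustion by finite sets, and to read (i)--(iii) off the finite-dimensional kernels. Fix an increasing sequence of finite connected sets $\Omega_1\subset\Omega_2\subset\cdots$ with $\bigcup_m\Omega_m=V$; one can take the balls $B_m=B(x_0,m)$, which are finite by local finiteness and exhaust $V$ by connectedness. On $\Omega_m$ consider the Dirichlet Laplacian $\Delta_{\Omega_m}$, namely $\Delta_G$ applied to functions extended by zero outside $\Omega_m$ and restricted to $\Omega_m$. This is a finite matrix, symmetric with respect to the inner product $\langle f,g\rangle=\sum_{x\in\Omega_m}\mu(x)f(x)g(x)$ because $\omega_{xy}=\omega_{yx}$, and it is negative semidefinite by the discrete Green formula. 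Define
\[
p_m(t,x,y)=\frac{1}{\mu(y)}\,\big(e^{t\Delta_{\Omega_m}}\mathbf 1_{\{y\}}\big)(x),
\]
equivalently $p_m(t,x,y)=\sum_j e^{-\lambda_j t}\phi_j(x)\phi_j(y)$ in a $\mu$-orthonormal eigenbasis. This formula gives symmetry $p_m(t,x,y)=p_m(t,y,x)$ immediately, and the semigroup law for $e^{t\Delta_{\Omega_m}}$ gives the Chapman--Kolmogorov identity (iii) on $\Omega_m$.

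Positivity and the mass bound for $p_m$ come from the maximum principle. Write $\Delta_{\Omega_m}=A-D$, where $D$ is diagonal with entries $m(x)/\mu(x)\le D_\mu$ and $A$ has nonnegative entries. Then $e^{t\Delta_{\Omega_m}}=e^{-tD_\mu}e^{t(A-D+D_\mu I)}$, and $A-D+D_\mu I$ has nonnegative entries. Hence $p_m\ge0$, and $p_m>0$ for $t>0$ on the connected set $\Omega_m$, since the corresponding matrix is irreducible. For the mass bound, the function $v(t,x)=\sum_y\mu(y)p_m(t,x,y)$ solves the Dirichlet heat equation with $v(0)=1$. It is $\le1$ because $1$ is a supersolution: $\Delta_{\Omega_m}\mathbf 1\le0$, since the boundary terms give a nonpositive contribution.

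Next, $p_m\le p_{m+1}$ on $\Omega_m$. The difference $p_{m+1}-p_m$ solves the heat equation on $\Omega_m$ with zero initial data and nonnegative boundary values, where the boundary values are exactly $p_{m+1}$ itself. The discrete parabolic maximum principle therefore gives $p_{m+1}-p_m\ge0$. Combined with the mass bound $p_m(t,x,y)\le 1/\mu(y)\le1/\mu_0$, the limit $p_G=\lim_m p_m$ exists pointwise. Passing to the limit, symmetry and strict positivity persist, the latter because $p_m$ is increasing and already positive once $x,y\in\Omega_m$. Property (ii) follows from Fatou, or monotone convergence, applied to the finite-level bound. For (iii), monotone convergence applied to $\sum_z\mu(z)p_m(t,x,z)p_m(s,z,y)=p_m(t+s,x,y)$ yields the identity in the limit.

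The main obstacle is justifying that this limit is the heat kernel of the statement, i.e.\ the smallest positive fundamental solution. That $p_G$ solves $\partial_t u=\Delta_G u$ follows from local finiteness: the Laplacian at $x$ involves only finitely many neighbours, so the limit can be taken termwise. The derivatives $\partial_tp_m$ are locally uniformly bounded, since $|\Delta_G f(x)|\le 2D_\mu\sup|f|$ near $x$, which also needs the uniform bound $\mu(y)\ge\mu_0$. Minimality follows because any positive fundamental solution dominates each $p_m$, again by the maximum principle on the finite set $\Omega_m$. I would cite \cite{Horn,Wu1} for this identification and check only the monotone-limit step in detail.
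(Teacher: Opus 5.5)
Your argument is correct. The paper gives no proof of this proposition: it lists (i)--(iii) as well-known properties with a pointer to \cite{Horn} and \cite{Wu1}. Those references ultimately rest on the exhaustion construction of the minimal heat kernel that you carry out.

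Compared with that bare citation, your write-up does two things. First, it actually identifies the monotone limit with the object the paper calls $p_G$, the smallest positive fundamental solution. Second, it works on any locally finite connected weighted graph. On a finite $\Omega_m$ you can shift by $\max_{\Omega_m} m(x)/\mu(x)$ instead of $D_\mu$. For fixed $x,y$ all bounds used in the limit are local, e.g.\ $p_m(t,\cdot,y)\le 1/\mu(y)$. So neither $D_\mu<\infty$ nor $\mu_0>0$ is really needed, despite your remark that the uniform bound $\mu\ge\mu_0$ is required.

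Under the paper's standing hypotheses there is also a shorter route. Since $D_\mu<\infty$, $\Delta_G$ is bounded on $L^2(V)$ and $L^\infty(V)$, with norm at most $2D_\mu$. One can then write $e^{t\Delta_G}=e^{-tD_\mu}\sum_n t^n(\Delta_G+D_\mu I)^n/n!$, where $\Delta_G+D_\mu I$ has nonnegative entries. For the kernel $\big(e^{t\Delta_G}\mathbf 1_{\{y\}}\big)(x)/\mu(y)$, symmetry, the semigroup law and strict positivity (by connectedness) are then immediate. Moreover $\Delta_G\mathbf 1=0$ gives equality in (ii), which is the stochastic completeness the paper later quotes from \cite{Keller}. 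That route, however, still needs an identification with the minimal kernel. This could come from uniqueness of bounded solutions of $u_t=\Delta_G u$ (a Gronwall argument, since $\Delta_G$ is bounded) or from your monotone exhaustion. So your approach is the more self-contained and more general of the two.
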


Under the curvature condition $CDE'(n,0)$, Horn et al. in \cite{Horn} proved the following Gaussian-type upper and lower bounds.
\begin{prop}\label{prop-esti-ker} Suppose that 
$ D_{\omega} < \infty, \, D_{\mu} < \infty$ 
and $G$ satisfies $CDE'(n,0)$. Then there exists a positive constant $C_1$ such that, for any $x, y \in V$ and $t>0$,
$$p_G(t,x,y) \leq \dfrac{C_1}{V_\mu(x, \sqrt{t})} $$
where $C_1$ depends on $n, D_{\omega
}, D_{\mu}$. Furthermore, for any $t_0 > 0$, there exist positive constants $C_2$ and $C_3$ such that
$$ p_G(t,x,y) \geq \dfrac{C_2 e^{-C_3d^2(x,y)/t}}{V_\mu(x, \sqrt{t})}$$
for all $x, y\in V$ and $t>t_0$, where $C_2$ depends on $n$ and $C_3$ depends on $n, D_{\omega
}, D_{\mu}$. In particular, for any $t_0 > 0$, we have
$$p_G(t,x,y) \geq \dfrac{C_2}{V_\mu(x, \sqrt{t})} $$
for all $x\in V$ and $t>t_0$.
\end{prop}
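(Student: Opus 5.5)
The proposition is the Gaussian heat kernel estimate of Horn et al.\ \cite{Horn}. I would prove it through a discrete Li--Yau inequality and the parabolic Harnack inequality it implies.

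\textbf{Step 1 (gradient estimate and Harnack inequality).} For a positive solution $u$ of $\partial_t u=\Delta_G u$ on a graph satisfying $CDE'(n,0)$, with $D_\omega,D_\mu<\infty$, I would prove the Li--Yau type estimate
\[
\frac{\Gamma(\sqrt{u})}{u}-\frac{\partial_t\sqrt{u}}{\sqrt{u}}\le \frac{n}{2t}.
\]
The method is a maximum principle argument applied to $t\,(\Gamma(\sqrt u)/u-\partial_t\sqrt u/\sqrt u)$. The condition $CDE'(n,0)$ is precisely what replaces the Bochner formula, since the chain rule is unavailable on graphs. Integrating this estimate along a shortest path from $x$ to $y$ gives the Harnack inequality
\[
u(t_1,x)\le u(t_2,y)\Big(\frac{t_2}{t_1}\Big)^{n}\exp\Big(\frac{C\,d(x,y)^2}{t_2-t_1}\Big),\qquad t_1<t_2,
\]
with $C$ depending on $D_\omega$ and $D_\mu$. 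Each edge contributes a jump estimate controlled by $\omega_0$ and $\mu_1$, which is where $D_\omega$ enters. Applying this to $p_G(\cdot,x,\cdot)$ also yields volume doubling, $V_\mu(x,2r)\le C V_\mu(x,r)$.

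\textbf{Step 2 (upper bound).} Fix $x,y$ and $t>0$. For $z\in B(x,\sqrt t)$, the Harnack inequality in the first variable (using the symmetry in Proposition \ref{KernProp}(i)) gives $p_G(t,x,y)\le C\,p_G(2t,z,y)$. Multiplying by $\mu(z)$ and summing over the ball gives
\[
V_\mu(x,\sqrt t)\,p_G(t,x,y)\le C\sum_{z}\mu(z)p_G(2t,z,y)\le C,
\]
where the last inequality uses Proposition \ref{KernProp}(ii) and symmetry. This is the required estimate with $C_1$ depending only on $n$, $D_\omega$ and $D_\mu$.

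\textbf{Step 3 (lower bound).} I would first obtain an on-diagonal lower bound. Boundedness of $D_\mu$ gives stochastic completeness, so $\sum_z\mu(z)p_G(t,x,z)=1$. Combined with an exponential tail estimate from the upper bound plus doubling, at least half of this mass lies in $B(x,A\sqrt t)$ for a suitable fixed $A$. Cauchy--Schwarz and the semigroup property (Proposition \ref{KernProp}(iii)) then give
\[
p_G(2t,x,x)=\sum_z\mu(z)p_G(t,x,z)^2\ge \frac{1}{4\,V_\mu(x,A\sqrt t)}\ge \frac{c}{V_\mu(x,\sqrt t)},
\]
using doubling in the last step. Applying the Harnack inequality between $(t/2,x)$ and $(t,y)$ then produces the factor $e^{-C_3 d(x,y)^2/t}$. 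The restriction $t>t_0$ is needed because the discrete Harnack constants and the comparison between $\sqrt t$ and the graph metric degenerate as $t\to0$. The final, distance-free form follows by taking $y=x$ in the Gaussian bound, or by simply discarding the exponential.

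\textbf{Main obstacle.} The main difficulty is Step 1: establishing the Li--Yau inequality from $CDE'(n,0)$ without a chain rule. I would follow Bauer--Horn--Lin--Lippner--Mangoubi--Yau and apply the maximum principle to the $\sqrt u$ formulation, which is where the specific form of $CDE'$ is designed to work.
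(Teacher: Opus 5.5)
The paper gives no proof of this proposition; it imports it from Horn et al.\ \cite{Horn}. Your outline (Li--Yau under $CDE'(n,0)$, then Harnack, then doubling, then two-sided bounds) is broadly the route of that work, and your Step 2 is fine. The genuine gap is the mass-concentration input $\sum_{z\in B(x,A\sqrt t)}\mu(z)p_G(t,x,z)\ge\frac12$, which you never establish.

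\emph{Why Step 3 does not supply it.} You attribute it to ``an exponential tail estimate from the upper bound plus doubling''. But the bound of Step 2 is uniform in $y$ and has no decay in $d(x,z)$. Summed over dyadic annuli it only gives terms of size $V_\mu(x,2^{k+1}A\sqrt t)/V_\mu(x,\sqrt t)$, which grow. The Harnack inequality does not help either: it only bounds $p_G(t,x,z)$ by values at later times, with factors that grow in $d(x,z)$. An exponential tail needs a genuine off-diagonal estimate, for instance Davies/Grigor'yan-type integrated bounds adapted to graphs, including the non-Gaussian regime $d\gg t$. That is a separate, nontrivial ingredient you have not supplied.

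\emph{Why Step 1's doubling claim fails too.} You say Harnack ``also yields volume doubling''. Harnack gives $V_\mu(x,\sqrt t)\,p_G(t,x,x)\le C$ and the monotonicity of $t\mapsto t^n p_G(t,x,x)$. Turning these into $V_\mu(x,2r)\le CV_\mu(x,r)$ requires the on-diagonal lower bound $p_G(t,x,x)\ge c\,V_\mu(x,A\sqrt t)^{-1}$, which is exactly the mass concentration. So, as written, Step 3 derives the tail from doubling, while doubling silently needs the tail. The argument is circular.

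\emph{A repair.} Use the full Li--Yau inequality, not only its Harnack consequence.
\begin{itemize}
\item[(a)] Let $u(t,z)=p_G(t,x,z)$. Sum $\Gamma(\sqrt u)\le\frac12\partial_t u+\frac{n}{2t}u$ against $\mu$. Stochastic completeness gives $\sum_z\mu(z)u(t,z)=1$, hence $\sum_z\mu(z)\Gamma(\sqrt u)(z)\le\frac{n}{2t}$.
\item[(b)] Take $\varphi=\min\{d(x,\cdot),R\}$. Green's formula, the identity $|u(w)-u(z)|=|\sqrt{u(w)}-\sqrt{u(z)}|\,(\sqrt{u(w)}+\sqrt{u(z)})$ and Cauchy--Schwarz give $\bigl|\partial_t\sum_z\mu(z)u(t,z)\varphi(z)\bigr|\le\sqrt{nD_\mu/t}$.
\item[(c)] Integrating in time and letting $R\to\infty$ gives $\sum_z\mu(z)p_G(t,x,z)\,d(x,z)\le2\sqrt{nD_\mu t}$. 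Chebyshev then yields the half-mass in $B(x,A\sqrt t)$ for some $A\ge1$ depending only on $n$ and $D_\mu$.
\item[(d)] Your Cauchy--Schwarz step now gives $p_G(2t,x,x)\ge1/(4V_\mu(x,A\sqrt t))$.
\item[(e)] Compare this with the Step 2 bound at time $4A^2t$, using the monotonicity of $t^np_G(t,x,x)$. This gives $V_\mu(x,2A\sqrt t)\le C\,V_\mu(x,A\sqrt t)$, which is doubling.
\item[(f)] Your Harnack step then produces the Gaussian lower bound.
\end{itemize}

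\emph{A separate slip.} The distance-free form cannot be obtained by ``simply discarding the exponential''. Dropping a factor $\le1$ from a lower bound goes the wrong way. Only $y=x$, or $d(x,y)\le\sqrt t$ at the cost of a factor $e^{-C_3}$, is legitimate. For general $y$ the displayed inequality cannot hold.
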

  
Haeseler et al. in \cite{HAESELER} has proven that $D_\mu < \infty$ is equivalent to the $\mu$-Laplacian $\Delta_G$ be bounded on $L^p(V)$ for all $1 \leq p \leq \infty$, where
the norm $\|\cdot\|_{L^p(V)}$ of $L^p(V)$ is defined as
$$
 \|f\|_{L^p(V)} = \left\{\begin{array}{cr}
 \left( \displaystyle \sum_{x \in V } \mu(x) \lvert f(x) \rvert ^p \right)^{1/p} \,\,\, , & \quad1 \leq p < \infty\\[10pt]
  \displaystyle \sup_{x \in V } \lvert f(x) \rvert \,\,\, ,  & \quad p = \infty.
      \end{array}\right.
$$
So we can define the heat semigroup $\{ S_{G}(t) \}_{t\geq 0}$ of $\Delta$ by
$$ [S_{G}(t)f](x) = \sum_{y \in V} \mu(y) p_{G}(t,x,y) f(y) . $$ 
Furthermore by \cite{Keller}, if $D_\mu < \infty$ then the graph $G$ is stochastically complete, i.e., $\sum_{y \in V} \mu(y) p_{G}(t,x,y) =1.$ for any $t > 0$ and $x \in V$. This implies that the semigroup $\{ S_{G}(t) \}_{t\geq 0}$ is sub-Markovian (see also \cite[Proposition 3.1]{Horn}). Thus, Theorem \ref{AbstractTHEO} can be applied in the following form.


\begin{cor}\label{co-graf1}Let $G = (V, E)$ be a graph that satisfies conditions $(1)$-$(6)$. Assume that $G$ satisfies the $CDE'(n,0)$ condition and complies with the volume growth condition as described in equation \eqref{growth-grap}.
\begin{itemize}
  \item[(i)] Let $f_i (u) = u^{q_i}$ or $f_i (u) = (1+u)[\ln(1+u)]^{q_i}$ with $q_i>1$, and $\ell_i(t) = t^{\alpha_i} \, (\alpha_i>-1)$, for $i = 1, \cdots, k$.
  \begin{itemize}
  \item[(a)] If \, $1 < q_i < 1 + {(2+2 \alpha_i)}/{d} $ for some $i = 1,\cdots,k$, then the problem $\eqref{eq2.1} $ has no nontrivial global solutions. 
  \item[(b)]If $q_i > 1 + {(2 + 2 \alpha_i)}/{d}$ for all $i = 1,\cdots, k$, then there is initial data small enough such that the problem \eqref{eq2.1} has a nontrivial global solution.
  \end{itemize}
  \item[(ii)] If   $f_i (u) = e^{ \beta_i u} -1$, $\beta_i>0$, $\ell_i(t) = t^{\alpha_i}$, $\alpha_i>-1$ for $i = 1,\cdots,k$ and $u_0 \geq 0$, $u_0 \neq 0$, then the problem \eqref{eq2.1} has no nontrivial global solutions.
\end{itemize} 
\end{cor}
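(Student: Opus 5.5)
The plan is to mirror the proofs of Corollaries \ref{coEXTRA} and \ref{coEXP}, replacing the Gaussian estimates \eqref{lower-est-Heisenberg} and \eqref{upper-est-heisenb} with heat-kernel bounds derived from Proposition \ref{prop-esti-ker} and the volume growth condition \eqref{growth-grap}. The first step is to convert these into two-sided decay estimates for the semigroup. For $w_0\in C_c(V)$, $w_0\ge 0$, Proposition \ref{prop-esti-ker} gives $p_G(t,x,y)\le C_1/V_\mu(x,\sqrt t)$. Since $w_0$ has finite support, it suffices to bound $V_\mu(y,\sqrt t)$ from below for $y$ in this finite support. By \eqref{growth-grap} and $B_{\sqrt t - d(x_0,y)}(x_0)\subset B_{\sqrt t}(y)$, we get
\begin{equation*}
p_G(t,x,y)=p_G(t,y,x)\le C\, t^{-d/2}\ln^{\xi}(1+t), \qquad t\ge t_1 .
\end{equation*}
Using the symmetry in Proposition \ref{KernProp}(i), this yields
\begin{equation*}
\|S_G(t)w_0\|_\infty \le C\,\|w_0\|_{L^1(V)}\, t^{-d/2}\ln^{\xi}(1+t), \qquad t\ge t_1 .
\end{equation*}
For the lower bound, take $K=\{x_0\}$, so $\mu(K)=\mu(x_0)\in(0,\infty)$. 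The last estimate in Proposition \ref{prop-esti-ker} together with $u_0(x_0)>0$ gives
\begin{equation*}
S_G(t)u_0(x_0)\ge \mu(x_0)u_0(x_0)\,p_G(t,x_0,x_0)\ge \frac{C}{V_\mu(x_0,\sqrt t)}\ge C\,t^{-d/2}\ln^{-\eta}(1+t)
\end{equation*}
for $t$ large. Note that the lower bound from Proposition \ref{prop-esti-ker} is stated with $V_\mu(x,\sqrt t)$ centred at $x$; taking $x=x_0$ matches \eqref{growth-grap} directly.

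For (i)(a), insert this lower bound into \eqref{TTa} with $j_0=i$, exactly as in Corollary \ref{coEXTRA}(i). The left side is bounded by
\begin{equation*}
C\,t^{d(q_i-1)/2}\ln^{\eta(q_i-1)}(1+t),
\end{equation*}
and the right side equals $t^{\alpha_i+1}/(\alpha_i+1)$. Because $d(q_i-1)/2<\alpha_i+1$ strictly, the power gap absorbs the logarithm, so \eqref{TTa} holds for large $T$ and Theorem \ref{thm1}(ii) applies. In the logarithmic case, we use $\ln(1+x)\ge x/(1+x)$ as before, again absorbing the $\ln^{\eta}$ factor through the strict inequality. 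Convexity of $(1+u)[\ln(1+u)]^{q_i}$ and the Osgood condition in $(H_2)$ are checked as implicitly done earlier.

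For (i)(b), verify \eqref{GLOB1}: split $\int_0^\infty=\int_0^m+\int_m^\infty$. The first piece is at most $\|w_0\|_\infty^{q_i-1}m^{\alpha_i+1}/(\alpha_i+1)$. The tail is at most
\begin{equation*}
C\|w_0\|_{1}^{q_i-1}\int_m^\infty \sigma^{\alpha_i-d(q_i-1)/2}\ln^{\xi(q_i-1)}(1+\sigma)\,d\sigma,
\end{equation*}
which converges since the exponent is $<-1$ strictly; the logarithm is harmless. Choose $m$ large first, then scale $w_0$ (say $w_0=\varepsilon\mathbf 1_{\{x_0\}}$) to make both pieces small, with $\|w_0\|_\infty\le\lambda$. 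Since $\|w_0\|_1$ also shrinks with $\varepsilon$, the tail may equally be made small without taking $m$ large. For the logarithmic nonlinearity, $F_i(s)\le(1+s)s^{q_i-1}$, as in Corollary \ref{coEXTRA}.

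For (ii), reproduce Corollary \ref{coEXP}: the integral $\int_{a}^\infty d\sigma/(e^{\beta_i\sigma}-1)=\beta_i^{-1}\ln\big(1/(1-e^{-\beta_i a})\big)$ grows only like $\ln(1/a)$ as $a\to0$. With $a\ge Ct^{-d/2}\ln^{-\eta}(1+t)$ this is $O(\ln t)$, which is $\le t^{\alpha_i+1}/(\alpha_i+1)$ for large $t$, so Theorem \ref{thm1}(ii) applies. The main obstacle is the first step: making the upper bound uniform in $x$ despite the volume being centred at varying points. The finite support of $w_0$ together with symmetry handles this, but it needs care to ensure the constants are independent of $x$.
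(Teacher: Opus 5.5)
Your proposal is correct and follows essentially the same route as the paper. Both parts come from Theorem \ref{thm1}, fed with the two-sided bounds of Proposition \ref{prop-esti-ker} combined with \eqref{growth-grap}, exactly as in Corollaries \ref{coEXTRA} and \ref{coEXP}. Your technical choices differ in two places, and both are slightly cleaner. First, taking $K=\{x_0\}$ uses the kernel lower bound only on the diagonal, $p_G(t,x_0,x_0)$, whereas the paper averages over a ball and invokes that bound for arbitrary $x$. Second, in (i)(b) you use a finitely supported $w_0$ with symmetry and ball inclusion, whereas the paper takes $u_0=\lambda\,p_G(\varepsilon_0,x_0,\cdot)$ and uses the semigroup identity $S_G(t)u_0=\lambda\,p_G(t+\varepsilon_0,\cdot,x_0)$.
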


\begin{proof} We only show (i) for the polynomial case, because the logarithmic case and the proof of (ii)  can be obtained by adapting the proofs of Corollaries \ref{coEXTRA} and \ref{coEXP}. 

(a) Let $u_0 \in L^p_{+}(V)$, $u_0 \neq 0$, and $x_0 \in V$ such that $u_0(x_0)>0$. Considering the Gaussian heat kernel estimate from Proposition \ref{prop-esti-ker} and the volume growth condition $V_\mu(x_0,\sqrt{t}) \leq c'' t^{d/2} [\ln (1+ \sqrt{t})]^{\eta}$ for $t \geq r_0^2$ as stated in \eqref{growth-grap}, we deduce 
    \begin{align*}
        \begin{aligned}
            [S_{G}(t)u_0](x) &= \sum_{y \in V} \mu(y) p_G(t,x,y) u_0(y)\\
            & \geq \mu(x_0) p_G(t,x,x_0) u_0(x_0)\\
            & \geq \dfrac{C_2 \mu(x_0) u_0(x_0) }{V_\mu(x_0, \sqrt{t})} \\
            & \geq C t^{-d/2} [\ln (1+ \sqrt{t})]^{- \eta},
        \end{aligned}
    \end{align*}
    for any $t > \max\{ t_0, r_0^2\}$ and for any $x \in V$. Thus, we arrive at 
    \begin{align*}
      \begin{aligned}
         \int_{ \frac{1}{V_\mu(x_0, r)} \sum_{x \in B_r(x_0)} \mu (x)[S_G ( t ) u _0](x) }^{\infty} \frac{d \sigma}{\sigma^{q_j}}  &\leq   \dfrac{   \left[  C t^{-d/2} [\ln (1+ \sqrt{t})]^{- \eta}  \right]^{1-q_j}}{q_j-1}\\
         &\leq  \dfrac{ C^{1-q_j} }{q_j-1} t^{d(q_j-1)/2} [\ln (1+ \sqrt{t})]^{\eta(q_j-1)}  \\ 
         & \leq \dfrac{t^{\alpha_j +1} }{\alpha_j+1}\\
         & =   \int_{0}^{t} h_j(\sigma) d \sigma,
      \end{aligned}
  \end{align*}
for $t$ large enough. The penultimate inequality holds provided that we select $t \geq \max\{ t_0, r_0^2\}$ to be sufficiently large, such that 
$$ t^{d(q_j-1)/2 - (\alpha_j+1)} [\ln (1+ \sqrt{t})]^{\eta(q_j-1)} \leq \frac{q_j -1}{(\alpha_j+1) C^{1-q_j}}. $$
This condition is achievable as long as $q_j < 1 + {2(\alpha_j+1)}/{d}.$ Therefore, the result is established by Theorem  \ref{thm1} (ii). \\


(b) Let $u_0 \in C(V)$ and $\lambda > 0$ such that $0 \leq u_0(x) \leq \lambda \, p_G(\varepsilon_0, x_0, x)$, where $u_0$ is nonzero and $u_0(x_0) > 0$, with constants $r_0^2 > \varepsilon_0 > 0$. The constant $\lambda > 0$ will be chosen later to be sufficiently small. It is important to note that, as per Proposition \ref{prop-esti-ker}, for every $n \in \mathbb{N}$, one can select $\lambda >0$ such that $\lambda \, p_G(\varepsilon_0, x_0, x)\leq {1}/{n}$. Consequently, it can be deduced that \( u_0 \in L^\infty(V) \cap L^p(V) \) for any \( p \geq 1 \), and  
$$\|  S_G(t)u_{0} \|_{L^{\infty}(V)} \leq \frac{1}{n},$$
for all $t>0$.
Utilizing heat kernel estimates (see Proposition \ref{KernProp}) and considering the volume growth condition \( c' (t+\varepsilon_0)^{d/2} [\ln (1+\sqrt{t+\varepsilon_0})]^{-\xi}\leq V_\mu(x_0, \sqrt{t+\varepsilon_0}) \) in \eqref{growth-grap}, which holds for \( t \geq r_0^2 - \varepsilon_0 \), we deduce that
 \begin{align*}
        \begin{aligned}
            [S_G(t)u_0](x) &=  \sum_{y \in V} \mu(y) p_G(t,x,y) u_0(y)\\
            & = \lambda \sum_{y \in V} \mu(y) p_G(t,x,y) p_G(\varepsilon_0, y, x_0) \\
            & = \lambda \, p_G(t+\varepsilon_0, x, x_0)\\
            & \leq \lambda \,  \dfrac{C_1 }{V_\mu(x_0, \sqrt{t+\varepsilon_0})} \\
            & \leq \lambda \, C \, (t+\varepsilon_0)^{-d/2} [\ln (1+ \sqrt{t+\varepsilon_0})]^{\xi},
        \end{aligned}
    \end{align*}
for any $t > \max\{ t_0, r_0 ^2 - \varepsilon_0\} \equiv m$ and $x\in V$. So 
$$\|  S_G(t)u_{0} \|_{L^{\infty}(V)} \leq \lambda \, C \, (t+\varepsilon_0)^{-d/2} [\ln (1+ \sqrt{t+\varepsilon_0} )]^{\xi},$$
for $t>m$. Then
\begin{align*}
& \sum_{i=1}^{k}    \int_{0}^{\infty}  \ell_i(\sigma)  F_i(\|S_{\mathbb{G}}(\sigma)w_{0}  \|_{\infty})  d\sigma \\
   & = \displaystyle\sum_{i=1}^k \int_{0}^{\infty}  \sigma^{\alpha_i} \|  S_G(\sigma)u_{0}  \|_{L^{\infty}(V)}^{q_i-1}  d\sigma  \\
   \noalign{\medskip}
   &= \displaystyle\sum_{i=1}^k \int_0^{m}     \sigma^{\alpha_i} \|  S_G(\sigma)u_{0}  \|_{L^{\infty}(V) }^{q_i-1}  d\sigma\\
    & + \displaystyle\sum_{i=1}^k \int_{m}^{\infty}     \sigma^{\alpha_i} \|  S_G(\sigma)u_{0}  \|_{L^{\infty}(V) }^{q_i-1}  d\sigma \\
    \noalign{\medskip}
    & \leq \left(\frac{1}{n}\right)^{q_i-1} \displaystyle\sum_{i=1}^k \int_0^{m}     \sigma^{\alpha_i}   d\sigma \\
    & + \lambda^{q_i-1} \displaystyle\sum_{i=1}^k  \int_{m}^{\infty} {\sigma}^{ \frac{-d(q_i-1)}{2} + \alpha_i} [\ln (1+ \sqrt{\sigma+\varepsilon_0})]^{\xi (q_i-1)}  d\sigma.     
\end{align*}
The last $k$ integrals above converge since $ -{d(q_i-1)}/{2} + \alpha_i <-1$ for all $i=1, \cdots,k$, i.e, $q_i< 1 + 2( \alpha_i+1)/d$ for all $i=1,\cdots,k.$ Thus, choosing $n$  sufficiently large (therefore, $\lambda$  sufficiently small) we obtain:
$$ \displaystyle\sum_{i=1}^k \int_{0}^{\infty}  \sigma^{\alpha_i} \|  S_G(\sigma)u_{0}  \|_{L^{\infty}(V)}^{q_i-1}  d\sigma < \frac{\beta}{\beta +1}, $$
for some $\beta>0$. Therefore, the condition in item (i) of Theorem 1 is satisfied, and the result follows.
\end{proof}

\begin{remark} By applying Corollary \ref{co-graf1} (i) with $i=1$, we obtain the Fujita critical exponent $q^* = 1 + {2(\alpha+1)}/{d}$. This exponent is also identified in the works \cite[Theorem 3.3]{Lin1} (where $\eta=0$) and \cite[Theorem 2.10]{LIU}.
\end{remark}

 \begin{remark} Similar arguments to those used in the proof of Corollaries \ref{coEXTRA} (i) and \ref{coEXP} can be used to treat the case $\ell_i(t) = e^{\alpha_i t}$, $\alpha_i>0$ (instead of $\ell_i(t) = t^{\alpha_i}$, $\alpha_i>-1 $). In this case, there are no global solutions for problem \eqref{eq2.1}. On the other hand, if \(\ell_i(t) = t^{\alpha_i}\), \(\alpha_i > -1\) (instead of $\ell_i(t) = e^{\alpha_i t}$, $\alpha_i > 0$), the proof of Corollary \ref{co2} (ii) shows that problem \eqref{eq2.1} has a global solution for any $q_i > 0$. In particular, the case  $i = 1$ recovers \cite[Theorem 1.1 - (1)]{BANDLE}.
\end{remark}

\begin{remark}[\textbf{Combined nonlinearities}] In view of the Corollaries \ref{coEXTRA}, \ref{co2} and \ref{co-graf1} (i), we can conclude that the result remains unchanged if there is a combination of polynomial and logarithmic nonlinearities $f_i(u) = u^{q_i}$, $q_i>1$ for $i=1, \cdots, k-l$ and $f_i(u) = (1+u)[\ln(1+u)]^{q_i}$, $q_i>1$ for $i =k-l+1, \cdots, k$. However, if there is a combination with an exponential nonlinearity $f_i (u) = e^{ \beta_i u} -1$, $\beta_i>0$ for some $i \in \{1, \cdots, k\} $, the result of the Corollaries \ref{coEXP}, \ref{coEXP-hiperbolico} and \ref{co-graf1}  (ii) prevails, and there are no global solutions.
\end{remark}

\backmatter
\vspace{1em}
\bmhead{Conflict of interest}
The authors declare that they have no competing interests.

\backmatter
\vspace{1em}
\bmhead{Data availability}
Data sharing do not apply to this article as no datasets were generated or analyzed during the current study.


\backmatter

\vspace{1em}



\bmhead{Acknowledgments}
R. Castillo is supported by ANID-FONDECYT project No. 11220152. R. Freire is supported by CNPq (Conselho Nacional de Desenvolvimento Científico e Tecnológico) Process 140600/2020-5. M. Loayza is partially supported by MATH-AMSUD project 21-MATH-03 (CTMicrAAPDEs), CAPES-PRINT 88887.311962/2018-00 (Brazil), CNPq - 313382/2023-9.

\vspace{1em}





\noindent

\end{document}